\documentclass[11pt]{amsart} 
\usepackage{amsmath, amssymb, latexsym, tikz}
\usepackage{caption,ifthen,url,fancyhdr,cite}
\usepackage[foot]{amsaddr}     
\usepackage{libertine}     
\usepackage{longtable}    
\usepackage{upquote}
\usepackage{listings}    
\usepackage{textcomp} 
\usepackage{adjustbox}

\usepackage[a4paper,right=2.6cm, left=2.6cm, top=2.6cm, bottom=2.6cm, marginpar=1.6cm]{geometry}
\usepackage{mathtools}
\usepackage{bm} 
\usepackage{bbm} 
\usepackage{xcolor} 
\usepackage{multicol}

\newtheorem{Tma}{Theorem}[section]

\newtheorem{lemma}[Tma]{Lemma}
\newtheorem{conjecture}{Conjecture}[section]
\newtheorem{proposition}[Tma]{Proposition}
\newtheorem{corollary}[Tma]{Corollary}

\theoremstyle{definition}

\newtheorem{remark}[Tma]{Remark}

\newcommand{\Mon}{{{\text{\rm{Mon}}}}}
\newcommand{\Aut}{{{\text{\rm{Aut}}}}}

\newcommand{\sym}{{\rm Sym}}
\newcommand{\Alt}{{\rm Alt}}
\newcommand{\alt}{{\rm Alt}}

\newcounter{ithmcount}

\newenvironment{ithm}{\begin{list}{{\rm \alph{ithmcount})}}{\usecounter{ithmcount}\labelwidth18pt
      \leftmargin18
pt \topsep3pt \itemsep1pt \parsep2pt}}{\end{list}}

\title{Automorphisms of Kimura Hadamard Matrices}
\author[Barrera Acevedo]{Santiago Barrera Acevedo}
\address[Barrera Acevedo]{La Trobe University, Department of Mathematical and Physical Sciences, Australia}
\email{S.Barrera.Acevedo@latrobe.edu.au}

\author[Lee]{Melissa Lee}
\address[Lee]{Monash University, School of Mathematics, Clayton 3800 VIC, Australia}
\email{Melissa.Lee@monash.edu}

\renewcommand{\leq}{\leqslant}

\date{}
\begin{document}

\vspace*{-0.5cm}

\maketitle


\begin{abstract} 
We investigate the structure of the automorphism groups of Kimura Hadamard matrices (KHMs) constructed from dihedral groups. We identify several different types of automorphisms, and show that the automorphism group of a KHM always has a subgroup isomorphic to $D_{2k}\times Q_8$, or $C_2\times D_{2k}\times Q_8$ if it is $y$-invariant. We exhibit additional automorphisms arising from the holomorph of the dihedral group under suitable structural conditions. A comparison with known examples, including those of Kimura, Niwasaki, and matrices arising from the Shinoda--Yamada construction, reveals counterexamples to a conjecture of Ó~Catha\'{\i}n and suggests that no further automorphisms occur beyond those predicted by our framework.
\newline

\noindent MSC: 05B10, 05B20 and 20J06.
\end{abstract}

\section{Introduction}

A \textit{Hadamard matrix} (HM) of order $n$ is an $n \times n$ matrix $H$ with entries from $\{-1, 1\}$, satisfying the property that $HH^\intercal = nI_n$, where $I_n$ denotes the identity matrix of order $n$. Two Hadamard matrices are \textit{equivalent} if one can be transformed into the other one through a series of row and column permutations and by changing the signs of rows and columns. The \textit{automorphism group} of a HM of order $n$ comprises all pairs of signed $n\times n$ permutation matrices that leave the matrix invariant under their natural action, see Section \ref{B&N} for details. The automorphism group of a Hadamard matrix gives a measure of the symmetry of the matrix. It captures how the rows and columns in the matrix can be permuted and sometimes negated while preserving the Hadamard property. More generally, the automorphism group provides a natural framework for applying algebraic methods to the analysis of Hadamard matrices. Marshall Hall \cite{Hall} was the first to consider the automorphism group of a Hadamard matrix. He showed that all HMs of order 12 are equivalent and that the full automorphism group of one of these Hadamard matrices is a central extension of the famous Mathieu group $M_{12}$. 
Following this discovery, various authors have investigated the automorphism groups of families of HMs or HMs whose automorphism group satisfies a given symmetry property. 
For instance, Kantor \cite{Kantor1} and de Launey and Stafford \cite{DS1,DS2} characterised the automorphism groups of \textit{Paley} HMs, while Egan and Flannery \cite{Egan} computed the full automorphism group of \textit{Sylvester} HMs. De Launey and Flannery \cite{dlf} studied central regular subgroups of the automorphism group of HMs, whose existence corresponds to the so-called \emph{cocyclic} HMs. These were further investigated by Ó Cath\'ain \cite{POCDT}, who also examined HMs with two-transitive automorphism groups \cite{Cat2trans}, extending work of Kantor \cite{Kantor2}. Barrera Acevedo et al. \cite{BAOCD} studied a class of \textit{two circulant core} (TCC) HMs whose automorphism group admits a transitive permutation representation; such matrices are called cocyclic TCC HMs.

In this work, we extend the study of automorphisms of HMs to the family of \textit{Kimura Hadamard matrices} (KHMs), as defined by Kimura \cite{Kimura}. These are HMs  of order $4m+4$ of the form given in \eqref{TCCKimura} where $A,B,C$ and $D$ are $\{\pm 1\}$-block matrices of order $m$ and $\bold{1}$ denotes the all-ones row vector of length $m$. 

\begin{equation}\label{TCCKimura}
\left[\begin{array}{rrrrrrrr}
1& 1 & 1 & 1 & \bold{1} & \bold{1} & \bold{1} & \bold{1}\\
1& 1 & -1 & -1 & \bold{1} & \bold{1} & \bold{-1} & \bold{-1}\\
1& -1 & 1 & -1 & \bold{1} & \bold{-1} & \bold{1} & \bold{-1}\\
1& -1 & -1 & 1 & \bold{-1} & \bold{1} & \bold{1} & \bold{-1}\\
\bold{1}^\intercal & \bold{1}^\intercal & \bold{1}^\intercal & \bold{-1}^\intercal & A & B& C & D\\
\bold{1}^\intercal & \bold{1}^\intercal& \bold{-1}^\intercal & \bold{1}^\intercal & -B & A & D & -C\\
\bold{1}^\intercal & \bold{-1}^\intercal& \bold{1}^\intercal & \bold{1}^\intercal & -C & -D & A & B\\
\bold{1}^\intercal & \bold{-1}^\intercal& \bold{-1}^\intercal & \bold{-1}^\intercal & D & -C & B & -A\\
\end{array}\right]
\end{equation}

Let $J_m$ be the all-ones matrix of order $m$ and let $\bold{0}$ denote the all-zeros row vector. Then, the matrix in Equation \ref{TCCKimura} is a HM if and only if

\begin{equation}\label{eq1}
\begin{aligned}
    AA^\intercal + BB^\intercal + CC^\intercal + DD^\intercal &= (4m+4)I_m - 4J_m, \\
    -AB^\intercal +BA^\intercal  + CD^\intercal- DC^\intercal &= 0, \\
    -AC^\intercal - BD^\intercal + CA^\intercal +DB^\intercal &= 0, \\
    AD^\intercal  - BC^\intercal+ CB^\intercal - DA^\intercal &=0, \\
\end{aligned}
\end{equation}
\begin{equation}\label{eq12}
\begin{aligned}
    \tau 2\bold{1} + \alpha A\bold{1} + \beta B\bold{1} + \gamma C\bold{1} + \delta D\bold{1} &= 0,
\end{aligned}
\end{equation}
where
\begin{equation*}
(\tau,\alpha,\beta,\gamma,\delta) \in 
\begin{Bmatrix}
(+,+,+,+,+), & (+,+,+,-,-), & (+,+,-,+,-), & (+,+,-,-,+), \\
(-,-,-,+,+), & (-,-,+,-,+), & (-,-,+,+,-)
\end{Bmatrix}.
\end{equation*}

Kimura's motivation for introducing KHMs arose from the classification of HMs of order 28 \cite{Kimura}. Kimura and Niwasaki \cite{Kimura,KimuraNiwasaki} constructed KHMs of order $8k+4$ for a finite number of odd values $k$ from dihedral groups $D_{2k}$. Their construction uses a binary version of the matrix in Table \ref{TCCKimura}, where the block matrices $A, B, C, D$ are images of elements in $\mathbb{Z}D_{2k}$ under the right regular representation of $\mathbb{Z}D_{2k}$; see Section \ref{B&N} for details. In the literature, these matrices are referred to as KHMs of dihedral type. In what follows, we restrict our attention to this class; thus, every mention of a KHM will refer to one of dihedral type. Further, Shinoda and Yamada discovered a family of KHMs \cite{ShinodaYamada} of order $8p+4$ where $p \equiv 1 \pmod{4}$ is an odd prime and $q = 2p - 1$ is a prime power. 

In this paper, for a Kimura Hadamard matrix $H$ of order $8k+4$, with $k$ odd, we identify several natural automorphisms that generate pairwise trivially intersecting subgroups of $\mathrm{Aut}(H)$ isomorphic to $D_{2k}$ and $Q_{8}$. We focus on $k$ odd, since the known KHMs have this property. Consequently, if $H$ has order $8k+4$ with $k>3$, then $|\mathrm{Aut}(H)| \geq 2^{4}k$. In the special case where $H$ is $y$--invariant (see Section~\ref{B&N}), we show that the automorphism group may be enlarged by one additional involution and hence $|\mathrm{Aut}(H)| \geq 2^{5}k$.  We also analyse further automorphisms that arise from the holomorph of $D_{2k}$ when certain structural conditions are satisfied. Finally, we compare the automorphisms obtained with those present in the examples of Kimura and Niwasaki, as well as in matrices arising from the Shinoda--Yamada construction for all admissible primes up to $601$. In doing so, we find counterexamples to a conjecture of Ó~Cath\'ain~\cite{POCDT}, and find no additional automorphisms beyond those accounted for by our framework.
 
The paper is organised as follows. Section \ref{B&N} introduces the necessary background, definitions, and notation, and reviews the constructions of Kimura and Niwasaki, and Shinoda and Yamada. Section \ref{AutsKHMs} presents our main results on automorphisms of KHMs, while Section \ref{Discussion} discusses how the structure of a KHM constrains its automorphisms and outlines some open questions.


\section{Background and Notation}\label{B&N}

All groups considered in this paper are finite. Let $C_n=\langle x\rangle$ denote the cyclic group of order $n$ and let $C_n^\times$ denote the group of units in $C_n$. Let $D_{2k}=\langle x,y \mid x^k=1, y^2=1, x^y=x^{-1}\rangle$ denote the dihedral group of order $2k$ and let $\mathbb{Z}D_{2k}$ be the integral group ring of $D_{2k}$. Let $\text{GL}_n(\mathbb{Z})$ be the general linear group of degree $n$ over the integers and let $\text{Mat}_n(\mathbb{Z})$ be the ring of $n\times n$ matrices with integer entries. If $G$ is a group, its \textit{centre} is defined by $Z(G)=\{g\in G: gh=hg\ \text{for all}\ h\in G\}$; an element $g\in Z(G)$ is called central. We denote by $\text{Sym}(\Omega)$ and $\Alt(\Omega)$ the symmetric and alternating groups on a finite set $\Omega$; when $\Omega$  has size $n$, we identify $\Omega=\{1,\ldots,n\}$ and  write $\sym_n$ and $\alt_n$. A subgroup $G$ of $\text{Sym}(\Omega)$ is called a \textit{permutation group} of degree $|\Omega|$. A group $G$ \textit{acts} on the set $\Omega$ if there is a homomorphism $f\colon G\to\text{Sym}(\Omega)$; the image of $\omega\in\Omega$ under $f(g)$ is usually denoted $\omega^g=\omega^{\alpha(g)}$. The action is \textit{faithful} if $\ker\alpha$ is trivial. The $G$-action on $\Omega$ is \textit{transitive} if for any $\alpha, \beta \in \Omega$ there exists $g \in G$ such that $\alpha^{g} = \beta$.  The \textit{stabiliser}  of  $\omega\in \Omega$ is the subgroup $G_\omega=\{g\in G: \omega^g=\omega\}$.
More generally, for $B\subset \Omega$, the setwise stabiliser of $B$ is the subgroup $G_B=\{g\in G : B^g = B\}$. The \textit{orbit} of $\omega\in \Omega$ is the set $\omega^G = \{\omega^g : g\in G\}$.
The \textit{right regular representation} of a group $G$ is a homomorphism $\rho:G \to \mathrm{Sym}(G)$ defined by $x^{\rho(g)} = xg$ for each $g,x \in G$. The left representation $\lambda$ of $G$ is defined analogously with $x^{\lambda(g)} = gx$.

\subsection{Primitive and imprimitive actions}
Let $G\leqslant \text{Sym}(\Omega)$ be a transitive permutation group acting on a set $\Omega$. A subset $B\subset \Omega$ is called a \textit{block} for $G$ if for all $g\in G$ either $B^g=B$ or $B\cap B^g=\emptyset$. A \textit{block system} $\mathcal{B}$ for $G$ is a partition of $\Omega$ into blocks for $G$. Note that the partitions $\{\{\Omega\}\}$ and  $\{\{\omega\}\mid \omega \in \Omega\}$ always form block systems for $G$; these are called the \textit{trivial} block systems. 
A transitive action is \textit{imprimitive} if there exists a nontrivial block system for $G$, and \textit{primitive} otherwise. If $B$ is a nontrivial block for $G$, then $G$ has block system $\mathcal{B}=\{B^g:g\in G\}$ on $\Omega$ since $G$ is transitive. The latter is a \textit{system of imprimitivity} for $G$, and clearly $G$ is transitive on the blocks. Note that every such system partitions $\Omega$ into subsets of the same size; in particular, if $\Omega$ is finite, then $|B|$ divides $|\Omega|$.

The group $\text{Mon}_n(\{\pm 1\})$ consists of all $\{\pm 1\}$-monomial matrices under matrix multiplication. Note that $\text{Mon}_n(\{\pm1\})=\text{Perm}_n\ltimes \text{Diag}_n$, where $\text{Perm}_n$ denotes the subgroup of $n\times n$ permutation matrices, and $\text{Diag}_n$ denotes the normal subgroup of $n\times n$ diagonal matrices. Thus, every element $R\in \text{Mon}_n(\{\pm1\})$ admits a unique decomposition of the form $R=P_R D_R$ with $P_R\in \text{Perm}_n$ and $D_R\in \text{Diag}_n$. The group $\text{Mon}_n^2(\{\pm 1\})$ acts on the set HMs of order $n$ via $ H^{(R,S)}=RHS^\intercal$. Hadamard matrices in the same $\text{Mon}_n^2(\{\pm 1\})$-orbit are called \textit{equivalent}. The {\it automorphism group} of $H$ is defined as the stabiliser \[\Aut(H)=\{(R,S)\in\Mon^{2}_n(\{\pm 1\})\colon RHS^\intercal=H\}\,.\]
An automorphism of $H$ is called {\it strong} if it is of the form $(R,R)$. We will see in Section \ref{AutsKHMs} that a KHM has nontrivial strong automorphisms.

\subsection{A permutation representation of $\text{Aut}(H)$}
A distinguished permutation representation of the automorphism group of a Hadamard matrix has been introduced to study the induced action of this group on the rows of the matrix, \cite[Definition 2]{Cat2trans}. We briefly recall this action. Let 
\[
\pi_1 : \text{Aut}(H) \to \text{Mon}_n(\{\pm 1\})
\] 
be defined by \(\pi_1(R,S) = R\). This projection is injective, so that 
\[
\text{Aut}(H) \cong \pi_1(\text{Aut}(H)).
\] 
Next, let 
\[
\pi : \pi_1(\text{Aut}(H)) \to \text{Perm}_n
\] 
be defined by $\pi(R) = P_R$, where $P_R$ is the permutation component of $R = P_R D_R$. Denote
\[
\mathcal{A}(H) = \pi(\pi_1(\text{Aut}(H))).
\] 
Identifying $\text{Perm}_n\equiv S_n$ we have that $\mathcal{A}(H)$ is a permutation group which encodes the (unsigned) action of $\text{Aut}(H)$ on the rows of $H$. Note that $\text{Ker}(\pi)=\text{Diag}_n$, and thus $\text{Aut}(H)$ is an extension of $\mathcal{A}(H)$ by $\text{Diag}_n$.

We conclude this section with a few definitions. A square matrix $M = (m_{i,j})_{i,j}$ of order $n$ is termed \textit{circulant} if its elements satisfy the condition $m_{i,j} = m_{0,j-i}$ for all indices $i$ and $j$, with the arithmetic performed modulo $n$. We denote by \({\rm diag}(Y_1, \dots, Y_n)\) to the block diagonal matrix with blocks \(Y_1, \dots, Y_n\) along the main diagonal, and by \({\rm bdiag}(Y_1, \dots, Y_n)\) to the block anti--diagonal matrix with blocks \(Y_1, \dots, Y_n\) along the main back--diagonal. More specifically, the blocks $Y_1, \dots, Y_n$ appear in order from the top right to bottom left of the matrix. Finally, for a set $S$ and elements $s_1,s_2 \in S$ we write $\delta_{s_1}^{s_2}$ to denote the Kronecker delta symbol. That is, $\delta_{s_1}^{s_2}=1$ if $s_1=s_2$ and $\delta_{s_1}^{s_2}=0$ otherwise.


\subsection{Kimura HMs}\label{KMSExp} From this section onward, let $x^0, x^1, \dots, x^{k-1}, y, x^1y, \dots, x^{k-1}y$ be a fixed ordering of the elements of $D_{2k}$, which by simplicity we denote $\mathcal{O}$; and let 
\[
\begin{array}{cccc}
\lambda: & D_{2k} & \to & \text{GL}_{2k}(\mathbb{Z})\\ 
 & g & \mapsto & [\delta^{g^{-1}x}_{y}]_{x,y\in \mathcal{O}}
\end{array}\quad
 \text{and} \quad
\begin{array}{cccc}
\rho: & D_{2k} & \to & \text{GL}_{2k}(\mathbb{Z})\\ 
 & g & \mapsto & [\delta^{xg}_{y}]_{x,y\in \mathcal{O}}
\end{array}
\]
be the left and right regular (matrix) representations of $D_{2k}$, respectively. 
Both representations can be extended to $\mathbb{Z}D_{2k}$ by linearity; by abuse of notation we also call these representations $\lambda:\mathbb{Z}D_{2k}\to \text{Mat}_n(\mathbb{Z})$ and $\rho:\mathbb{Z}D_{2k}\to \text{Mat}_n(\mathbb{Z})$. Since both representations $\lambda$ and $\rho$ are injective homomorphisms, we identify an element of $\mathbb{Z}D_{2k}$ with its image under $\lambda$ or $\rho$, respectively. We also replace $1 \mapsto 1$ and $0 \mapsto -1$ in the matrices to align with the notation of Kimura and Niwaski \cite{KimuraNiwasaki}.


Kimura and Niwaski \cite{Kimura, KimuraNiwasaki} found elements $a,b,c,d\in \mathbb{Z}D_{2k}$, for various values $k$, such that the associated matrices $A, B, C, D$ satisfy the conditions in Equations \eqref{eq1} and \eqref{eq12}, and thus yield KHMs of orders $4(2k)+4$. In their work, they used both the left and right regular representations of $D_{2k}$. We show that to study KHMs, it suffices to consider the right regular representation of $D_{2k}$. Observe that for every $g\in D_{2k}$ the matrices $\lambda(g)$ and $\rho(g)$ have the form:
\begin{equation}\label{circulantstructure}
\lambda(g) = \left[ \begin{array}{rr} X_1 & X_2\\ X_2 & X_1 \end{array} \right], \quad
\rho(g) = \left[ \begin{array}{rr} Y_1 & Y_2\\ Y_2^\intercal & Y_1^\intercal \end{array} \right],
\end{equation}
where $X_1, X_2, Y_1, Y_2$ are 
square matrices of size $k\times k$ with $X_1, Y_1, Y_2$ circulant and $X_2$ back-circulant.
We note that $X_1,X_2,Y_1$ and $Y_2$ have constant row sum, hence so do $\lambda(g)$ and $\rho(g)$. Now, for $k$ odd, let $P$ be the $k\times k$ matrix associated with the permutation $(2,k)(3,k-1)\dots((k+1)/2,(k+3)/2)$ and let $Q=\text{diag}(P,I_{2k})$. We have $Q\rho(g)Q^{-1}=\lambda(g)$ for all $g\in D_{2k}$. A direct calculation shows the following. 

\begin{lemma}\label{equivrep}
Let $a,b,c,d\in \mathbb{Z}D_{2k}$. Then the $\pm 1$--matrices associated to $\rho(a),\rho(b),\rho(c),\rho(d)$ satisfy the conditions in Equation \eqref{eq1} if and only if $\pm 1$-matrices associated to $\lambda(a),\lambda(b),\lambda(c),\lambda(d)$ satisfy the conditions in Equation \eqref{eq1}. Moreover, if $H_1$ and $H_2$ are the KHMs obtained from the matrices associated to $\rho(a),\rho(b),\rho(c),\rho(d)$ and $\lambda(a),\lambda(b),\lambda(c),\lambda(d)$, respectively, then they are equivalent via $(R,R)$ where $R={\rm diag}(1,1,1,1,Q,Q,Q,Q)$. 
\end{lemma}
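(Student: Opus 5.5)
The plan is to reduce everything to the single fact stated before the lemma: $Q$ is a permutation matrix, so $Q^{-1}=Q^\intercal$, and $Q\rho(g)Q^{-1}=\lambda(g)$ for all $g\in D_{2k}$.

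\textbf{Step 1: transfer to the $\pm1$-matrices.} Extending linearly gives $Q\rho(a)Q^\intercal=\lambda(a)$ for every $a\in\mathbb{Z}D_{2k}$. Passing to $\pm1$-matrices replaces a $0/1$ matrix $M$ by $2M-J_{2k}$. Since $Q$ is a permutation matrix, $QJ_{2k}Q^\intercal=J_{2k}$. So if $A_\rho$ and $A_\lambda$ are the $\pm1$-matrices attached to $\rho(a)$ and $\lambda(a)$, then $A_\lambda=QA_\rho Q^\intercal$, and likewise for $b,c,d$.

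\textbf{Step 2: the conditions in Equation \eqref{eq1}.} Every term in these equations has the form $XY^\intercal$ with $X,Y\in\{A,B,C,D\}$. Conjugation gives
\[
(QXQ^\intercal)(QYQ^\intercal)^\intercal = QXY^\intercal Q^\intercal .
\]
So each left-hand side for the $\lambda$-matrices equals $Q(\cdot)Q^\intercal$ applied to the corresponding left-hand side for the $\rho$-matrices. The right-hand sides $0$ and $(4m+4)I_m-4J_m$ (here $m=2k$) are fixed by conjugation by $Q$, since $QI_mQ^\intercal=I_m$ and $QJ_mQ^\intercal=J_m$. Conjugation is invertible, so the $\rho$-system holds exactly when the $\lambda$-system holds. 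This proves the first claim.

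\textbf{Step 3: the equivalence of $H_1$ and $H_2$.} Take $R={\rm diag}(1,1,1,1,Q,Q,Q,Q)$ and compute $RH_1R^\intercal$ block by block.
\begin{itemize}
\item The top-left $4\times4$ block is unchanged.
\item The border blocks $\pm\mathbf{1}$ in the top rows become $\pm\mathbf{1}Q^\intercal=\pm\mathbf{1}$, because a permutation matrix preserves the all-ones vector. Likewise the left-column blocks $\pm\mathbf{1}^\intercal$ become $\pm Q\mathbf{1}^\intercal=\pm\mathbf{1}^\intercal$.
\item Each inner block $\pm X_\rho$ becomes $\pm QX_\rho Q^\intercal=\pm X_\lambda$.
\end{itemize}
Hence $RH_1R^\intercal=H_2$, so $(R,R)\in\Mon^2_n(\{\pm1\})$ realises the equivalence.

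\textbf{Step 4: Equation \eqref{eq12} and the Hadamard property.} Equation \eqref{eq12} transfers too, since $A_\lambda\mathbf{1}^\intercal=QA_\rho Q^\intercal\mathbf{1}^\intercal=QA_\rho\mathbf{1}^\intercal$; so one vector equation holds iff the other does. It is cleaner, though, to argue directly: $H_2=RH_1R^\intercal$ with $R$ signed monomial, so $H_2$ is Hadamard iff $H_1$ is. This gives the \emph{moreover} part with no further work.

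\textbf{Main obstacle.} The real content is the identity $Q\rho(g)Q^{-1}=\lambda(g)$, which the paper takes as given. If I had to verify it, I would check it on the generators $x$ and $y$ using the block forms in \eqref{circulantstructure}. Conjugating by $P$ reverses indices modulo $k$, which turns circulant blocks of the form $Y_1,Y_2$ into the $X_1$ and back-circulant $X_2$ shapes. The subtle point is whether one conjugating matrix works simultaneously for $x$ and $y$. In particular the identity block on the coset $yC_k$ needs care, and I would confirm the exact index conventions before relying on it.
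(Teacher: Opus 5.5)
Your proposal is correct and follows the paper's route: the paper simply invokes $Q\rho(g)Q^{-1}=\lambda(g)$ and leaves the rest as ``a direct calculation'', and your Steps 1--3 carry out exactly that calculation. The identity you flag as the main obstacle does hold, reading the paper's $Q={\rm diag}(P,I_{2k})$ as ${\rm diag}(P,I_k)$ so the sizes match: $Q$ is then the permutation matrix of inversion $g\mapsto g^{-1}$ on $D_{2k}$ (it sends $x^i\mapsto x^{-i}$ and fixes every $x^iy$), so $Q=Q^{-1}$ and $(Q\rho(g)Q)_{u,w}=\delta^{(u^{-1}g)^{-1}}_{w}=\delta^{g^{-1}u}_{w}=\lambda(g)_{u,w}$ for all $u,w\in D_{2k}$, with no separate check on $x$ and $y$ needed.
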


In view of Lemma \ref{equivrep}, for the rest of this work, we only consider the right regular representation of $D_{2k}$.  Let now $|w|$ denote the number of non--zero coefficients of an element $w\subseteq \mathbb{Z}D_{2k}$; observe that $w=w_1+w_2y$ with $w_1,w_2\in \mathbb{Z}\langle x\rangle$. For a given KHM $H$ with elements $a,b,c,d \in \mathbb{Z}D_{2k}$ having $\{0,1\}$–coefficients, Kimura \cite{Kimura} translated the condition $HH^{\intercal} = n I_n$ into relations among $a,b,c,d$, yielding the following set of conditions.
\begin{lemma}[\cite{Kimura}, Proposition 4]\label{parameters}
Suppose $a,b,c,d\in \mathbb{Z}D_{2k}$ yield a Kimura Hadamard matrix $H$. Then the following equations must be satisfied:
\begin{itemize}
\item[(NC1)] $|a_1|+|a_2|=k-1$,
\item[(NC2)] $|b_1|+|b_2|=|c_1|+|c_2|=|d_1|+|d_2|=k$,
\item[(NC3)] $|a_1|^2+|a_2|^2+|b_1|^2+|b_2|^2+|c_1|^2+|c_2|^2+|d_1|^2+|d_2|^2=2k^2+1$,
\item[(NC4)] $|a_1||a_2|+|b_1||b_2|+|c_1||c_2|+|d_1||d_2|=k(k-1)$.
\end{itemize}
\end{lemma}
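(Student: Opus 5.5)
We will prove the four conditions by testing the first identity of Equation \eqref{eq1}, together with Equation \eqref{eq12}, against two eigenvectors shared by all the blocks: the all-ones vector $\bold{1}$ of length $m=2k$, and the ``sign'' vector $u=(\bold{1}_k,-\bold{1}_k)^\intercal$, which separates the rotations from the reflections in the ordering $\mathcal{O}$. Throughout, for $w=w_1+w_2y$ with $\{0,1\}$-coefficients, the block is $W=2\rho(w)-J_{2k}$, because of the replacement $0\mapsto -1$.

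\textbf{Step 1: action on $\bold{1}$.} Row $g$ of $\rho(w)$ has its ones in the columns $gh$ for $h$ in the support of $w$. Hence every row sum of $\rho(w)$ equals $|w|=|w_1|+|w_2|$, and the same holds for the columns. So $W\bold{1}=W^\intercal\bold{1}=s_w\bold{1}$, where $s_w=2|w|-2k$ is an even integer. We apply the first line of Equation \eqref{eq1} to $\bold{1}$. Since $J_m\bold{1}=2k\bold{1}$, this gives
\[
s_a^2+s_b^2+s_c^2+s_d^2=(8k+4)-8k=4 .
\]
Because each $s_w$ is even, exactly one of $s_a,s_b,s_c,s_d$ equals $\pm 2$ and the other three are $0$. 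Equation \eqref{eq12} reads $2\tau+\alpha s_a+\beta s_b+\gamma s_c+\delta s_d=0$, so the nonzero $s_w$ is fixed by the sign pattern $(\tau,\alpha,\beta,\gamma,\delta)$. With Kimura's normalisation this is $s_a=-2$, and then $s_b=s_c=s_d=0$. This yields NC1 and NC2.

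\textbf{Step 2: action on $u$.} Since $x^iw_1$ consists of rotations and $x^iw_2y$ of reflections, row $x^i$ of $\rho(w)$ has $|w_1|$ ones in the first half and $|w_2|$ ones in the second half. For row $x^iy$ the roles are swapped, because $x^iy\,x^j$ is a reflection and $x^iy\,x^jy$ is a rotation. Therefore $\rho(w)u=(|w_1|-|w_2|)u$. The analogous column count gives the same for $\rho(w)^\intercal$. As $Ju=0$, we obtain $Wu=W^\intercal u=2t_w u$ with $t_w=|w_1|-|w_2|$. Applying the first line of Equation \eqref{eq1} to $u$ gives
\[
4\,(t_a^2+t_b^2+t_c^2+t_d^2)=8k+4,
\qquad\text{so}\qquad
\sum_w (|w_1|-|w_2|)^2=2k+1 .
\]

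\textbf{Step 3: combine.} From Step 1 we have $\sum_w(|w_1|+|w_2|)^2=(k-1)^2+3k^2=4k^2-2k+1$. Adding this to the identity of Step 2 gives
\[
2\sum_w\bigl(|w_1|^2+|w_2|^2\bigr)=4k^2+2,
\]
which is NC3. Subtracting instead gives
\[
4\sum_w|w_1||w_2|=4k^2-4k,
\]
which is NC4.

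The only delicate point is the sign bookkeeping in Step 1. One must check that the admissible sign tuples in Equation \eqref{eq12}, together with the convention of Equation \eqref{TCCKimura}, force the exceptional block to be $a$ with $|a|=k-1$ rather than $k+1$. If the normalisation instead allowed $|a|=k+1$, one would replace $a$ by its complement, which negates the block $A$. We also double-check that the left/right action conventions make $u$ an eigenvector of both $W$ and $W^\intercal$. The rest is the direct computation above.
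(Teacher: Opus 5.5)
This lemma is not proved in the paper. It is quoted from Kimura's Proposition 4, with only the remark that it comes from translating $HH^\intercal=nI_n$ into relations among $a,b,c,d$. Your argument is a correct, self-contained proof.

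The vectors $\mathbf{1}$ and $u$ are common eigenvectors of $\rho(D_{2k})$. They afford the trivial character and the sign character ($x\mapsto 1$, $y\mapsto -1$), and $\rho(w)^\intercal=\rho(w^{*})$ has the same eigenvalue on them because these characters are real. So Steps 1--2 amount to evaluating the first identity of \eqref{eq1} at the two linear characters of $D_{2k}$, which is the natural way to carry out that translation. The computations all check out: $Wu=W^\intercal u=2(|w_1|-|w_2|)u$, the identity $\sum_w(|w_1|-|w_2|)^2=2k+1$, and adding or subtracting $\sum_w(|w_1|+|w_2|)^2=4k^2-2k+1$ to get (NC3) and (NC4).

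The one place to tighten is Step 1. No ``normalisation'' is needed, and the complement fallback should be dropped.
\begin{itemize}
\item The tuples in \eqref{eq12} are simultaneous conditions, coming from orthogonality of the four border rows with the four block rows.
\item Up to an overall sign they give four distinct equations $2+s_a\pm s_b\pm s_c\pm s_d=0$, with $(\beta,\gamma,\delta)$-patterns $(+,+,+)$, $(+,-,-)$, $(-,+,-)$, $(-,-,+)$.
\item Adding the four gives $8+4s_a=0$.
\item Subtracting each of the last three from the first gives $s_c+s_d=s_b+s_d=s_b+s_c=0$.
\end{itemize}
Hence $s_a=-2$ and $s_b=s_c=s_d=0$ directly. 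So \eqref{eq12} alone yields (NC1) and (NC2), and your sum-of-squares argument on $\mathbf{1}$ is redundant, though consistent.

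The fallback of replacing $a$ by its complement would not be legitimate in any case. Negating $A$ inside the fixed frame \eqref{TCCKimura} makes $s_a=+2$ and violates \eqref{eq12}, so that situation cannot arise.
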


Conditions (NC1) and (NC2) imply that $A$ has constant row sum $-2$, and $B,C,D$ have constant row $0$. It follows that, $A,B,C,D$ satisfy Equation \eqref{eq12}. Kimura used Lemma \ref{parameters} to determine all possible parameter values 
$\lvert a_1\rvert, \lvert a_2\rvert, \dots, \lvert d_1\rvert, \lvert d_2\rvert$ for $k \le 27$.  He then performed computer searches to find KHMs satisfying these conditions. The matrices found by Kimura and Niwasaki \cite{Kimura, KimuraNiwasaki}  were also found by computer search, by imposing symmetry conditions on $a, b, c, d$. We recall the main assumptions; for a full justification see \cite[Sections 3 and 4]{KimuraNiwasaki}.  Kimura and Niwasaki assumed that: 

\begin{itemize}
\item[(C1)] $|a_1|\geqslant |a_2|$,
\item[(C2)] $|b_1|,|c_1|,|d_1|$ are odd integers, 
\item[(C3)] $|b_1|\geqslant |c_1|,|d_1|$. 
\end{itemize}
 
Additionally, they assumed the following symmetry condition; a subset $w_1+w_2y\in \mathbb{Z}D_{2k}$ with $w_1,w_2\in \mathbb{Z}\langle x\rangle$ is called \textit{$y$--invariant} if $y^{-1}w_iy = w_i$ for $i=1,2$. This yields the following definition. A KHM $H$ is called \textit{$y$--invariant} if $a,b,c,d$ are $y$--invariant; this led Kimura and Niwasaki to assume that

\begin{itemize}
\item[(C4)] $H$ is $y$--invariant. 
\end{itemize}

Kimura and Niwasaki observed that $w=w_1+w_2y\in \mathbb{Z}D_{2k}$ is $y$--invariant if and only if $w_1$ and $w_2$ commute with every element of $\mathbb{Z}D_{2k}$. In addition, if $w=w_1+w_2y\in \mathbb{Z}D_{2k}$ is $y$--invariant then $\rho(w_1)$ and $\rho(w_2)$ are symmetric matrices. Thus, if $H$ is a $y$--invariant KHM then each $W\in \{A,B,C,D\}$ is symmetric, and so are their blocks $W_1$ and $W_2$. It follows from (C4) and the structure of $W$ that the blocks $W_1$ and $W_2$ are circulant and symmetric. This implies that the matrices $A,B,C,D$ commute in pairs and satisfy the conditions in Equation \eqref{eq1}. 

Another symmetry condition considered by Kimura and Niwasaki in their searches for KHMs requires that, for $a,b,c,d \in \mathbb{Z}D_{2k}$, the following equations hold.
\begin{itemize}
\item[(C5)] $a_1=j-1-a_2$,\quad $b_1=j-b_2$,\quad $c_1=1+c_2$,\quad $d_1=1+d_2$, 
\end{itemize}

where $j$ represents the sum of all the elements in $D_{2k}$. Under these assumptions, Kimura and Niwasaki found examples of KHMs for all odd $k\leqslant 29$; except for $k=15$. They also found a KHM for $k=41$. All of these matrices are $y$--invariant, except for when $k=3,11$. For $k=3,5,13$, the found KHMs satisfy (C5). They also claimed to have found a $y$--invariant KHM satisfying (C5) for $k = 37$ but did not provide the elements $a,b,c,d$ that define such a matrix. Kimura and Niwasaki remarked at the end of their last joint work on KHMs that the case $k=15$ is yet to be investigated, but they never reported anything additional for this value. 

In the following lemma, we provide bounds for each of $|a_i|, |b_i|, |c_i|, |d_i|$ for $i = 1,2$. These bounds can be useful for computational searches of KHMs.

\begin{lemma}\label{conditions}
\label{all_ones_block}
Let $a,b,c,d \in \mathbb{Z}D_{2k}$ yield a KHM $H$ of order $8k+4$ with $k\geqslant 5$. Then $|a_i| <k-1$ and $|b_i|,|c_i|,|d_i|<k$ for each $i\in \{1,2\}$.
\end{lemma}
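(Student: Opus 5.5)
The plan is to use only the counting conditions of Lemma~\ref{parameters}, mainly (NC3). First I reduce the statement to a claim about empty components.

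Each $w_i\in\mathbb{Z}\langle x\rangle$ has at most $k$ nonzero coefficients. By (NC1) and (NC2), $|a_1|+|a_2|=k-1$ and $|w_1|+|w_2|=k$ for $w\in\{b,c,d\}$. So $|a_i|\geq k-1$ happens exactly when $\{|a_1|,|a_2|\}=\{k-1,0\}$. Likewise $|w_i|\geq k$ happens exactly when $\{|w_1|,|w_2|\}=\{k,0\}$. It therefore suffices to show that, for $k\geqslant 5$, none of the eight components $a_1,a_2,\dots,d_2$ can be empty.

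The key tool is the elementary inequality $u^2+v^2\geq s^2/2$ whenever $u+v=s$ with $u,v\geq 0$. In the unconstrained blocks this gives contributions of at least $(k-1)^2/2$ from $a$ and at least $k^2/2$ from each of $b,c,d$. I then argue by contradiction, using (NC3), in two cases.

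\emph{Case 1: a component of $a$ is empty.} Then $|a_1|^2+|a_2|^2=(k-1)^2$. The left side of (NC3) is then at least
\[
(k-1)^2+\tfrac{3}{2}k^2 .
\]
This exceeds $2k^2+1$ if and only if $\tfrac12k^2-2k>0$, that is, $k>4$. This is exactly where the hypothesis $k\geqslant 5$ enters.

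\emph{Case 2: a component of one of $b,c,d$ is empty, say of $b$.} The other two are symmetric. That block contributes $k^2$, and the left side of (NC3) is at least
\[
k^2+\tfrac{(k-1)^2}{2}+k^2 .
\]
This exceeds $2k^2+1$ as soon as $(k-1)^2>2$, which already holds for $k\geqslant 3$.

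In both cases (NC3) is violated, which proves the lemma. I do not expect a real obstacle. The only step needing care is Case~1, where the margin is smallest and the bound $k\geqslant5$ is sharp for this argument. If that inequality turned out too weak, I would fall back on (NC4): an empty component of $a$ forces $|b_1||b_2|+|c_1||c_2|+|d_1||d_2|=k(k-1)$. Since each product is at most $(k^2-1)/4$ for $k$ odd, this sum is at most $3(k^2-1)/4$. That is smaller than $k(k-1)$ for $k\geqslant 5$, so it gives the same conclusion independently.
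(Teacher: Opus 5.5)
Your proposal is correct and follows essentially the same route as the paper. Both arguments combine the bound $|w_1|^2+|w_2|^2\geq s^2/2$ with (NC3), and the critical case, an extreme component of $a$, forces $k\leq 4$. You also spell out the $b,c,d$ case that the paper only calls ``similar'' and add an optional (NC4) cross-check; both are sound.
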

\begin{proof}
We appeal to Lemma \ref{parameters}. Since $|a_1|+|a_2|=k-1$;
and $|b_1|+|b_2|=|c_1|+|c_2|=|d_1|+|d_2|=k$, we find that $|a_1|^2+|a_2|^2 \geq (k-1)^2/2$, $|b_1|^2+|b_2|^2 \geq k^2/2$, and the latter bound also holds for $c$ and $d$. If $a_i=k-1$ for some $i\in \{1,2\}$, then the above bounds along with (NC3) of Lemma \ref{parameters} imply
\[
3k^2/2 \leq |b_1|^2+|b_2|^2+|c_1|^2+|c_2|^2+|d_1|^2+|d_2|^2 = k^2+2k,
\]
which shows that $k=1$ or $k=3$; this is a contradiction. A similar argument holds if any of $|b_i|,|c_i|,|d_i|$ is equal to $k$.
\end{proof}

Shinoda and Yamada \cite{ShinodaYamada} presented a construction of KHMs of order $8p+4$, where $p \equiv 1 \pmod{4}$ is an odd prime and $q = 2p - 1$ is a prime power. Their construction gives explicit elements $A,B,C,D$ satisfying the conditions in Equations \eqref{eq1} and \eqref{eq12}. We recall the main steps of this construction. For a given prime $p$ satisfying the conditions above, consider the finite fields $E=\text{GF}(p)$, $F=\text{GF}(q)$ and $K=\text{GF}(q^2)$. Let $\chi_2$ be a quadratic character of $E^\times$ and define $\chi_2(0)=0$. Let $\chi_4$ and $\chi_p$ be a biquadratic character and the $p$--th residue character of $K^\ast$, respectively. Let $\chi=\chi_4\chi_p$ and define $\chi(0)=0$. Let $\zeta$ be a primitive element of $K^\ast$ and let $S_{K/F}$ be the relative trace from $K$ to $F$. Set 

\begin{equation*}
u=\sum_{r=0}^{p-1} \chi(S_{K/F}\zeta^{4r})x^r;\quad 
v=\sum_{r=0}^{p-1} \chi(S_{K/F}\zeta^{4r+p})x^r;\quad
w =\sum_{r=0}^{p-1} \chi_2(r)x^r;\quad
G = \sum_{g\in D_{2p}}g.
\end{equation*}

Then, the $\pm 1$--versions $A,B,C,D$ associated to  $a=((1-y)v+G-(1-y))/2$, $b=((1-y)u+G)/2$, and $c=d=(1-y+(1+y)w+G)/2$ satisfy the conditions Equations \eqref{eq1} and \eqref{eq12} and yield a KHM of order $4(2p)+4$; moreover, such a matrix is $y$--invariant, see \cite[Theorem 3]{ShinodaYamada}. Observe that the construction by Shinoda and Yamada covers the previously missing case $k=37$.

We conclude this section by noting that an exhaustive computer search for $y$--invariant KHMs with $k = 3, 11$ did not yield any such matrices. In light of the aforementioned computations and construction, examples of KHMs are known for all $k\leq 41$ except for $k = 15, 31, 33, 35, 39$.


\section{Automorphisms of Kimura Hadamard matrices}\label{AutsKHMs}
In this section, we study the automorphism group of a Kimura Hadamard matrix. 
Recall that $\mathcal{O}$ denotes the fixed ordering $x^0, x^1, \dots, x^{k-1}, y, x^1y, \dots, x^{k-1}y$ of elements in  $D_{2k}$. Also, recall that $\rho$ denotes the right regular representation of $D_{2k}$, and for every $g \in D_{2k}$, we have  
\[
\rho(g) = \begin{bmatrix} X_1 & X_2 \\ X_2^\intercal & X_1^\intercal \end{bmatrix},
\]
where $X_1, X_2$ are circulant square matrices of size $k$. Now, suppose $H$ is a KHM with block matrices $A, B, C, D$. Then, for each $W \in \{A, B, C, D\}$, the matrix $W$ has the form  
\[
W = \begin{bmatrix} W_1 & W_2 \\ W_2^\intercal & W_1^\intercal \end{bmatrix},
\]
where $W_1, W_2$ are circulant matrices of size $k\times k$.  Next, recall that an element $w_1 + w_2 y \in \mathbb{Z}D_{2k}$, with $w_1, w_2 \in \mathbb{Z}\langle x \rangle$, is called $y$--invariant if $w_1$ and $w_2$ commute with every element of $\mathbb{Z}D_{2k}$. Moreover, a circulant matrix $W_i$ is symmetric if its first row $(r_0, \dots, r_{k-1})$ satisfies  
\[
r_i = r_{k-i} \quad \text{for all } i \in \{1, \dots, \lfloor \frac{k-1}{2} \rfloor\}.
\]
\subsection{General automorphisms of KHMs}
\label{general_auts}
We begin our construction of automorphisms of a KHM with those arising from the natural action of the dihedral group in the right regular representation.

\begin{proposition}\label{automorphisms1}
Let $H$ be a KHM of order $8k+4$, with block matrices $A,B,C,D$, let $i\in \{1,2\}$ and let $W \in \{A,B,C,D\}$. Then the following are automorphisms of $H$.
\begin{ithm}

\item An automorphism $\sigma_1$ of order $k$ that preserves the block matrices and represents the natural simultaneous group development of each $W_1,W_2$ over $C_k$.

\item An involution $\sigma_2$ that preserves the block matrices and simultaneously swaps $W_i$ and $W_i^\intercal$ within each $W$. 

\end{ithm}
Together, these are strong automorphisms that generate a group isomorphic to $D_{2k}$.
\end{proposition}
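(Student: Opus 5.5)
We realise both automorphisms as conjugation by the left regular representation and check everything blockwise. The key fact is that in the group ring left and right multiplication commute. Hence for every $g\in D_{2k}$ and every $w\in\mathbb{Z}D_{2k}$ we have $\lambda(g)\rho(w)\lambda(g)^{-1}=\rho(w)$; indeed $\lambda(g)$ and $\rho(h)$ commute as permutation matrices for all $g,h$. Passing to the $\pm1$-versions, which is the affine map $M\mapsto 2M-J$ with $J$ commuting with permutation matrices, gives $\lambda(g)W\lambda(g)^{\intercal}=W$ for each $W\in\{A,B,C,D\}$. The candidate automorphisms are therefore
\[
R_g=\mathrm{diag}\bigl(1,1,1,1,\lambda(g),\lambda(g),\lambda(g),\lambda(g)\bigr),
\]
with $\sigma_1=(R_x,R_x)$ and $\sigma_2=(R_y,R_y)$.

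The main step is to verify $R_gHR_g^{\intercal}=H$ blockwise.

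\begin{itemize}
\item The $4\times4$ corner is fixed trivially.
\item The border blocks have the form $\pm\bold{1}$ or $\pm\bold{1}^\intercal$. They are fixed because $\lambda(g)$ is a permutation matrix, so $\bold{1}\lambda(g)^\intercal=\bold{1}$ and $\lambda(g)\bold{1}^\intercal=\bold{1}^\intercal$.
\item Each interior block is $\pm W$, and $\lambda(g)(\pm W)\lambda(g)^\intercal=\pm W$ by the commutation above.
\end{itemize}

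Since the same $R_g$ acts on rows and columns, these automorphisms are strong. The map $g\mapsto (R_g,R_g)$ is an injective homomorphism because $\lambda$ is faithful, so the group generated by $\sigma_1$ and $\sigma_2$ is $\langle R_x,R_y\rangle\cong D_{2k}$.

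Finally we match these with the descriptions in (a) and (b). In the ordering $\mathcal{O}$, $\lambda(x)$ is $\mathrm{diag}(T,T')$, where $T$ and $T'$ are the $k$-cycle shift matrices, the second possibly being the inverse shift. Conjugating $W=\left[\begin{smallmatrix}W_1&W_2\\W_2^\intercal&W_1^\intercal\end{smallmatrix}\right]$ by this matrix cyclically develops each circulant block $W_1$ and $W_2$ simultaneously, which gives order $k$ and the description in (a). The matrix $\lambda(y)$ is the block anti-diagonal matrix $\mathrm{bdiag}(P',P')$, where $P'$ is a flip permutation; a direct computation from $\lambda(y)=[\delta^{yz}_{w}]$ gives $P'=I_k$ or a reversal. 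Conjugating by it interchanges the diagonal blocks $W_1\leftrightarrow W_1^\intercal$ and the off-diagonal blocks $W_2\leftrightarrow W_2^\intercal$. Because circulants satisfy $PW_iP=W_i^\intercal$ for the reversal $P$, this is the description in (b). It is also an involution since $y^2=1$.

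The main obstacle is bookkeeping rather than substance. One has to compute $\lambda(x)$ and $\lambda(y)$ explicitly in the ordering $\mathcal{O}$, with the transposition conventions of the paper, so that the block actions genuinely read as the stated cyclic development and the $W_i\leftrightarrow W_i^\intercal$ swap. If the left-regular matrices clash with the paper's $\rho$ conventions, the fallback is to take the permutations $z\mapsto gz$ directly on the index set $D_{2k}$ and verify $W_{gz,gz'}=W_{z,z'}$. This identity holds because $\rho(w)$ has $(z,z')$-entry equal to the coefficient of $z^{-1}z'$ in $w$, and $(gz)^{-1}gz'=z^{-1}z'$.
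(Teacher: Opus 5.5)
Your proof is correct, and the automorphisms you construct are exactly the paper's. In the ordering $\mathcal{O}$ one has $\lambda(x)={\rm diag}(S,S)$ with the \emph{same} $k$-cycle shift $S$ in both halves; it is $\rho(x)$, not $\lambda(x)$, whose halves shift in opposite directions. So $R_x$ is the paper's $F$ up to inversion. Likewise $\lambda(y)={\rm bdiag}(q_k,q_k)$, where $q_k$ is the reversal $x^i\mapsto x^{-i}$, and this is precisely the paper's $K$.

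This resolves both of your hedges. Note that $P'=I_k$ could not have worked in general: conjugating by ${\rm bdiag}(I_k,I_k)$ fixes $W$ only when $W_1,W_2$ are symmetric, which is the $y$-invariant automorphism $\sigma_5$ of Proposition \ref{y_inv_automorphisms}.

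Where you genuinely differ from the paper is the verification. The paper checks $FHF^\intercal=H$, $LHL^\intercal=H$ and the relation $K^{-1}EK=E^{-1}$ each by direct calculation. You obtain all three at once from two facts: the left and right regular representations commute, and $\lambda$ is faithful.

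Your route explains why these particular matrices are automorphisms. It also gives the whole embedding $g\mapsto(R_g,R_g)$ of $D_{2k}$ into the strong automorphisms in one step, and it does not use that $k$ is odd. The paper's route is more hands-on, but it ties the automorphisms to explicit matrices that are reused later, e.g.\ in Corollary \ref{coro1} and Lemma \ref{permgroup}.
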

\begin{proof}
\begin{ithm}

\item Let $p_k$ be the matrix of the permutation $(1,2,\dots, k)$. Let $E={\rm diag}(p_k,p_k)$ be a block diagonal matrix of order $2k$ and let $F={\rm diag}(I_4,E,E,E,E)$ be a block diagonal matrix  of order $8k+4$. A direct calculation shows that $\sigma_1=(F,F)$ is an automorphism of $H$ describing a cycle of order $k$. 

\item Let $q_k$ be the matrix of the permutation $(2,k)(3,k-1)\dots((k+1)/2),(k+3)/2)$, then, define $K={\rm bdiag}(q_k,q_k)$, and $L={\rm diag}(I_4,K,K,K,K)$ of order $8k+4$. A direct calculation shows that the involution $\sigma_2=(L,L)$ is an automorphism of $H$ that swaps $W_i$ and $W_i^\intercal$ within each $W\in \{A,B,C,D\}$. 
\end{ithm}
It remains to prove that $\langle \sigma_1,\sigma_2\rangle$ generate a group isomorphic to $D_{2k}$. We show that $\sigma_1,\sigma_2$ satisfy the presentation $\langle r, s \mid r^{k} = 1,\; s^{2} = 1,\; srs = r^{-1} \rangle$ for $D_{2k}$. It is sufficient to show that $\sigma_2^{-1}\sigma_1\sigma_2 = \sigma_1^{-1}$, which is equivalent to showing $K^{-1}EK = E^{-1}$. A direct calculation yields the result.
\end{proof}

We now identify additional automorphisms of Kimura Hadamard matrices arising from the natural action of the quaternion group on their blocks.

\begin{proposition}\label{automorphisms2}
Let $H$ be a KHM of order $8k+4$, with block matrices $A,B,C,D$, let $i\in \{1,2\}$ and let $W \in \{A,B,C,D\}$. Then the following are automorphisms of $H$.
\begin{ithm}

\item Conjugation by the negative identity matrix, which acts trivially on the rows and columns of the matrix. 

\item Elements $\sigma_3 = (R_1, S_1)$, $\sigma_4 = (R_2,S_2)$ of order 4, where:

\begin{align*}
R_1&={\rm diag}({\rm bdiag}(-1,1,-1,1), ({\rm diag}(-1,1,-1,1)P_1)\otimes I_{2k} ),\\
S_1&={\rm diag}( {\rm diag}(1,-1,-1,1)P_2, ({\rm diag}(-1,1,1,-1)P_1)\otimes I_{2k} ),\\
R_2&={\rm diag}( {\rm diag}(-1,1,1,-1)P_1, ({\rm diag}(1,-1,-1,1)P_2)\otimes I_{2k} ),\\
S_2&={\rm diag}( {\rm bdiag}(1,1,-1,-1), ({\rm diag}(1,1,-1,-1)P_2)\otimes I_{2k} ),\\
\end{align*}
 such that $P_1, P_2$ are the permutation matrices associated to the permutations $(1,2)(3,4),(1,3)(2,4)$ of $S_4$, respectively.
 
\end{ithm}
Moreover, $\sigma_3, \sigma_4$ generate a subgroup of $\mathrm{Aut}(H)$ isomorphic to the quaternion group $Q_8$.
\end{proposition}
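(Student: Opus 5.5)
The plan is to verify each part by direct block computation. We work throughout with the decomposition of $H$ into a $4\times4$ \emph{border} block, two \emph{edge} blocks $4\times 4m$ and $4m\times 4$ (here $m=2k$), and the $4m\times 4m$ \emph{core} $\mathcal{C}$ built from $A,B,C,D$. Part a) is immediate: $(-I,-I)$ gives $(-I)H(-I)^\intercal=H$, and its permutation part is trivial.

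For part b), all four matrices $R_1,S_1,R_2,S_2$ are block diagonal with a $4\times4$ block and a $4m\times 4m$ block, the latter of the form $M\otimes I_{2k}$ with $M$ a signed $4\times 4$ permutation. Hence $R H S^\intercal=H$ splits into four conditions.
\begin{itemize}
\item[(i)] On the border: $R^{(0)}H_0S^{(0)\intercal}=H_0$, where $H_0$ is the $4\times4$ Hadamard block in the top left. This is a finite check of $4\times4$ signed matrices.
\item[(ii)] On the edges: since every row of the top edge block is $\pm\mathbf 1$, the condition reduces to a $4\times4$ identity $R^{(0)}E_{\rm top}\,(M_S)^\intercal=E_{\rm top}$. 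Here $E_{\rm top}$ is the $4\times 4$ sign pattern of the edge, and we use $\mathbf 1 I_{2k}=\mathbf 1$. The left edge is handled similarly.
\item[(iii)] On the core: write $\mathcal{C}=\sum_{W} \Phi_W\otimes W$, where $\Phi_W$ is the signed $4\times4$ pattern of $W$ in the Williamson/quaternion-type array. For example, $\Phi_A={\rm diag}(1,1,1,-1)$, and $\Phi_B$ has entries $+1$ at $(1,2)$ and $(3,4)$, $-1$ at $(2,1)$, and $+1$ at $(4,3)$. Then $(M_R\otimes I)\mathcal{C}(M_S\otimes I)^\intercal=\sum_W (M_R\Phi_W M_S^\intercal)\otimes W$.
\end{itemize}
It therefore suffices to check $M_R\Phi_WM_S^\intercal=\Phi_W$ for each of the four sign patterns. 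No property of $A,B,C,D$ is used beyond the array shape. In particular Equations \eqref{eq1} are not needed, which explains why the automorphisms exist for every KHM.

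Next we establish the group structure. Since $\pi_1$ is injective, it suffices to work with $R_1,R_2$. We compute $R_1^2$, $R_2^2$ and $R_1R_2R_1^{-1}$. On the $4m$ part, $M\otimes I$ multiplies like $M$, so everything reduces to products of $4\times4$ signed permutations. We aim to show $R_1^2=R_2^2=-I_{8k+4}$ and $R_2^{-1}R_1R_2=R_1^{-1}$. These relations give the presentation $\langle r,s\mid r^4=1,\ r^2=s^2,\ s^{-1}rs=r^{-1}\rangle$ of $Q_8$, so $\langle\sigma_3,\sigma_4\rangle$ is a quotient of $Q_8$. The kernel is trivial because $\sigma_3^2=(-I,-I)\neq 1$, so $\sigma_3$ has order $4$ and the unique minimal subgroup of $Q_8$ survives. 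It also follows that the central involution is exactly the automorphism of part a), which ties the two parts together. Finally, we must also confirm that $S_1^2=S_2^2=-I$, as required for the pair to be consistent, though this is automatic from injectivity of $\pi_1$.

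The main obstacle is bookkeeping rather than ideas. The sign conventions must be right: which of ${\rm diag}(\cdot)P$ versus $P\,{\rm diag}(\cdot)$ is meant, and how ${\rm bdiag}$ orders entries. The border block of $R_1$ must also be compatible with the core block through the edge constraints. I would first fix the $\Phi_W$ explicitly from \eqref{TCCKimura} and verify the core conditions. Those conditions determine $M_R,M_S$ up to a common sign, and that sign is then pinned down by the edge conditions. If a stated sign appears inconsistent, I would check whether a global sign $(-I,-I)$ discrepancy resolves it.
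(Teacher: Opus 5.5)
Your proposal is correct and follows the same route as the paper: a direct verification that $R_iHS_i^\intercal=H$, then checking $\sigma_3^2=\sigma_4^2=(-I,-I)$ and the conjugation relation of the $Q_8$ presentation. Your Kronecker reduction $\mathcal{C}=\sum_W\Phi_W\otimes W$ is a tidy way to organise that calculation, the stated signs do pass all the $4\times4$ checks, and your observation that $\sigma_3^2\neq 1$ makes the surjection from $Q_8$ injective supplies a faithfulness step the paper leaves implicit.
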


\begin{proof}
\begin{ithm}

\item The tuple $(-I_{4(2k+1)},-I_{4(2k+1)})$ is an involution that acts trivially on the rows and columns of $H$.

\item A direct calculation shows that $\sigma_3$ and $\sigma_4$ are automorphisms of $H$ of order 4. Recall that a presentation for the quaternion group $Q_8$ is given by 
$\langle i, j \mid i^{4} = 1,\; i^{2} = j^{2},\; j^{-1} i j = i^{-1} \rangle.$
We have $\sigma_3^2 = \sigma_4^2 = (-I_{4(2k+1)}, -I_{4(2k+1)})$
and  $\sigma_3^{-1}\sigma_4\sigma_3 = \sigma_4 (-I_{4(2k+1)}, -I_{4(2k+1)}) = \sigma_4^3 = \sigma_4^{-1}$.
Hence $\sigma_3,\sigma_4$ generate a group isomorphic to $Q_8$.
\qedhere
\end{ithm}
\end{proof}

For a $y$--invariant KHM, we discovered a further involutory automorphism.

\begin{proposition}\label{y_inv_automorphisms}
Let $H$ be a KHM of order $8k+4$, with $y$--invariant block matrices $A,B,C,D$. Then $H$ has an involutory automorphism that swaps, within each block, the top and bottom halves as well as the left and right halves. 
\end{proposition}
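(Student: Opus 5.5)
The plan is to exhibit the automorphism explicitly as a strong automorphism $(M,M)$, where $M$ is a permutation matrix, and then to verify $MHM^\intercal = H$ block by block. Let $T = {\rm bdiag}(I_k, I_k)$ be the $2k\times 2k$ permutation matrix that swaps the first $k$ coordinates with the last $k$. Set $M = {\rm diag}(I_4, T, T, T, T)$, of order $8k+4$. Since $T^2 = I_{2k}$ and $T^\intercal = T$, the matrix $M$ is a symmetric involution. Conjugation by $T$ on a $2k\times 2k$ block exchanges its top and bottom halves and its left and right halves, which is exactly the action described in the statement.

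First, I would use $y$--invariance to pin down the shape of each block. As recalled in Section \ref{KMSExp}, if $H$ is $y$--invariant then, for each $W\in\{A,B,C,D\}$, the circulant blocks $W_1, W_2$ are symmetric. Hence
\[
W = \begin{bmatrix} W_1 & W_2 \\ W_2^\intercal & W_1^\intercal \end{bmatrix} = \begin{bmatrix} W_1 & W_2 \\ W_2 & W_1 \end{bmatrix}.
\]
A direct computation then gives $TWT^\intercal = \begin{bmatrix} W_1 & W_2 \\ W_2 & W_1 \end{bmatrix} = W$. This computation extends to every signed block $\pm W$ appearing in the $4\times 4$ block array of $H$, since $M$ acts by the same $T$ on every block row and every block column. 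So the $8k\times 8k$ lower-right part of $H$ is fixed.

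Next I would check the border. The top-left $4\times 4$ corner is untouched, because $M$ has $I_4$ there. The border blocks are $\pm\bold{1}$ (rows of length $2k$) and $\pm\bold{1}^\intercal$. Each of these is invariant under $T$, because $T$ is a permutation matrix and $\bold{1}T^\intercal = \bold{1}$, $T\bold{1}^\intercal = \bold{1}^\intercal$. Hence $MHM^\intercal = H$, so $(M,M)$ is a strong involutory automorphism.

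The main obstacle is really the only nontrivial input: justifying that $W_2^\intercal = W_2$ and $W_1^\intercal = W_1$ under $y$--invariance. For this I would rely on the observation already quoted from Kimura and Niwasaki, that $\rho(w_1)$ and $\rho(w_2)$ are symmetric for $y$--invariant $w$. If needed, I would verify it directly: $w_i$ being central in $\mathbb{Z}D_{2k}$ means $y w_i y = w_i$, i.e.\ its coefficient on $x^j$ equals that on $x^{-j}$, so the circulant blocks are symmetric.

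Optionally, I would note that this involution is not in $\langle\sigma_1,\sigma_2\rangle\times\langle\sigma_3,\sigma_4\rangle$. The reason is that it interchanges the two halves $\{x^j\}$ and $\{x^jy\}$ of each block index set, which none of the previous generators do. This prepares the enlargement to $C_2\times D_{2k}\times Q_8$, which would then follow from $M$ commuting with $F$, $L$ and the $Q_8$ generators. Commutation with $F$ and $L$ holds because $T$ commutes with ${\rm diag}(p_k,p_k)$ and ${\rm bdiag}(q_k,q_k)$, and commutation with the $Q_8$ generators holds because they act as $(\cdot)\otimes I_{2k}$.
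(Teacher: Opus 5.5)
Your proof is correct and matches the paper's: the paper takes exactly $\sigma_5=(Q,Q)$ with $Q=\mathrm{diag}(I_4,P,P,P,P)$ and $P=\mathrm{bdiag}(I_k,I_k)$ and only asserts that it fixes $H$, while you supply the check (symmetry of $W_1,W_2$ under $y$--invariance, and invariance of the all-ones borders). One correction to your optional aside, which this statement does not need: $\sigma_2$ also swaps the halves $\{x^j\}$ and $\{x^jy\}$, since its block is $\mathrm{bdiag}(q_k,q_k)$, so $\sigma_5\notin\langle\sigma_1,\sigma_2\rangle\times\langle\sigma_3,\sigma_4\rangle$ should instead be argued from two facts: the top-left $4\times4$ block forces the $Q_8$-component to be trivial, and $q_k\notin\langle p_k\rangle$.
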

\begin{proof}
Let $P={\rm bdiag}(I_k,I_k)$ and let $Q={\rm diag}(I_4,P,P,P,P)$. Then $\sigma_5=(Q,Q)$ is an automorphism of $H$ that swaps, within each block, the top and bottom halves as well as the left and right halves.
\end{proof}

\begin{corollary}
 \label{coro1}
If $H$ is a KHM of dimension $8k+4$ with $k\geq 3$, then $ (Q_8\times D_{2k})\leq \mathrm{Aut}(H)$. In particular, $|\mathrm{Aut}(H)|\geq 2^4k$. Moreover, if $H$ is $y$--invariant, then $(Q_8\times D_{2k} \times C_2)\leqslant  \mathrm{Aut}(H)$. In particular, $|\text{Aut}(H)|\geqslant 2^5 k$. 
\end{corollary}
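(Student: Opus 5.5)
The plan is to combine Propositions \ref{automorphisms1}, \ref{automorphisms2} and \ref{y_inv_automorphisms}. Let $G_1=\langle\sigma_1,\sigma_2\rangle\cong D_{2k}$ and $G_2=\langle\sigma_3,\sigma_4\rangle\cong Q_8$. To get $G_1\times G_2\le \mathrm{Aut}(H)$ it suffices to show three things: every $\sigma_i$ with $i\in\{1,2\}$ commutes with every $\sigma_j$ with $j\in\{3,4\}$, $G_1\cap G_2=1$, and $\sigma_5$ commutes with $\sigma_1,\dots,\sigma_4$ and lies outside $G_1G_2$.

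For commutation, I would use the block structure of the matrices. In $\sigma_1=(F,F)$ and $\sigma_2=(L,L)$ the matrices have the form ${\rm diag}(I_4,M,M,M,M)=I_4\oplus (I_4\otimes M)$, with $M=E$ or $M=K$ of order $2k$. In $\sigma_3,\sigma_4$ the matrices have the form $N\oplus (T\otimes I_{2k})$, with $N$ of order $4$ and $T$ a signed $4\times 4$ permutation matrix. The mixed-product rule gives $(I_4\otimes M)(T\otimes I_{2k})=T\otimes M=(T\otimes I_{2k})(I_4\otimes M)$, and on the first $4\times4$ block one factor is $I_4$. So each $R$- and $S$-component commutes, and therefore so do the pairs. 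The same argument applies to $\sigma_5=(Q,Q)$ with $Q=I_4\oplus(I_4\otimes P)$, since its component has the same shape as those of $\sigma_1,\sigma_2$. For $\sigma_5$ against $\sigma_1,\sigma_2$, I would check directly that $P={\rm bdiag}(I_k,I_k)$ commutes with $E={\rm diag}(p_k,p_k)$ and with $K={\rm bdiag}(q_k,q_k)$. Both are immediate: $PE=\mathrm{bdiag}(p_k,p_k)=EP$, and $PK={\rm diag}(q_k,q_k)=KP$.

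For the intersections, every element of $G_1$ is a pair of the form $(I_4\oplus(I_4\otimes M),\ \cdot\ )$, so it acts trivially on the first four rows. In $G_2\cong Q_8$ the first $4\times4$ block of the $R$-component is ${\rm bdiag}(-1,1,-1,1)$ for $\sigma_3$ and ${\rm diag}(-1,1,1,-1)P_1$ for $\sigma_4$. The map sending an element of $G_2$ to this block is a homomorphism whose kernel is a normal subgroup of $Q_8$. That kernel cannot contain $\sigma_3^2=-I$, since the block there is $-I_4\ne I_4$. Every nontrivial normal subgroup of $Q_8$ contains the centre $\langle -I\rangle$, so the kernel is trivial and $G_1\cap G_2=1$. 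Hence $G_1G_2\cong D_{2k}\times Q_8$, and $|\mathrm{Aut}(H)|\ge 16k=2^4k$.

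For the $y$-invariant case, it remains to show $\sigma_5\notin G_1G_2$. An element $g_1g_2$ of $G_1G_2$ acts trivially on the first four rows only if $g_2=1$, by the argument above, so $\sigma_5$ could only lie in $G_1$. Its permutation part is the swap $x^i\leftrightarrow x^iy$ on each $2k$-block, which is $\rho$-side action of $y$ composed with something? I would verify this directly instead. The elements of $G_1$ are $\mathrm{diag}(p_k^j,p_k^j)$ and $\mathrm{bdiag}(p_k^jq_k,\dots)$-type matrices. A bdiag-type element with both off-diagonal blocks equal to $I_k$ would need $p_k^jq_k=I_k$, which is impossible for $k\ge3$ because $q_k$ is a reflection and not a rotation. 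Then $\langle G_1G_2,\sigma_5\rangle\cong Q_8\times D_{2k}\times C_2$, giving order $2^5k$. The main obstacle is this last non-membership check: the exact form of $E^jK$ has to be computed carefully, and one must make sure no product with $Q_8$ components sneaks in.
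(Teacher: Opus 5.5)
Your proof is correct, but it establishes the trivial intersections by a different route from the paper's. For commutation the paper only says ``a direct calculation''. Your mixed-product observation $(I_4\otimes M)(T\otimes I_{2k})=T\otimes M=(T\otimes I_{2k})(I_4\otimes M)$, together with $PE=EP$ and $PK=KP$, is that calculation made explicit.

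For the intersections the paper reuses the commutation. A subgroup commuting elementwise with $H_1\cong D_{2k}$ can meet it only inside $Z(D_{2k})=1$ (for $k$ odd), so $H_1\cap H_2=1$ and $H_1\cap H_2H_3=1$. Likewise $H_2\cap\langle\sigma_5\rangle\le Z(Q_8)=\langle(-I,-I)\rangle$, and $\sigma_5\notin\langle(-I,-I)\rangle$.

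You instead project onto the leading $4\times4$ block of the $R$-component. This projection is trivial on $G_1$. On $G_2$ it is a homomorphism whose kernel misses the unique involution $(-I,-I)$ of $Q_8$, so the kernel is trivial. That gives $G_1\cap G_2=1$ without using $Z(D_{2k})=1$. It also reduces $\sigma_5\notin G_1G_2$ to $\sigma_5\notin G_1$, which you settle by noting that $\mathrm{bdiag}(I_k,I_k)\neq\mathrm{bdiag}(p_k^jq_k,p_k^jq_k)$, since $q_k$ is a nontrivial permutation fixing a point and so is not a power of $p_k$.

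The explicit check you flagged as the main obstacle is sound, but it is avoidable. The paper's centre argument yields $\sigma_5\notin G_1G_2$ with no computation of $E^jK$ at all.

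Your abandoned remark can also be completed. In the ordering $\mathcal{O}$, $\mathrm{bdiag}(I_k,I_k)$ is the matrix of right multiplication by $y$, while $E$ and $K$ realise left multiplication by $x^{\pm1}$ and by $y$. Left and right translations commute and coincide only for central elements, which explains at once why $\sigma_5$ commutes with $G_1$ and why $\sigma_5\notin G_1$.
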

\begin{proof}
A direct calculation verifies that the groups  $H_1=\langle \sigma_1, \sigma_2 \rangle \cong D_{2k}$, $H_2 =\langle \sigma_3, \sigma_4 \rangle\cong Q_8 $ and $H_3 = \langle \sigma_5 \rangle\cong C_2$ pairwise commute, and in particular, so do their elements. 
Neither $H_2$ nor $H_3$ intersect $H_1$ non--trivially because $|Z(D_{2k})|=1$. 
Now $|Z(H_2)|=2$, and is generated by $\sigma_3^2 = \sigma_4^2 = (-I_{4(2k+1)}, -I_{4(2k+1)})$ of order 2. But $\sigma_5$ does not lie in this subgroup, hence $H_2$ and $H_3$ intersect trivially. Hence $\langle H_2, H_3 \rangle = H_2\times H_3$. Moreover, every element of $H_2\times H_3$ commutes with every element of $H_1$, but $Z(H_1)=1$. Hence $H_1$ and $H_2\times H_3$ intersect trivially. 
Therefore $\langle H_1,H_2,H_3 \rangle = H_1\times H_2 \times H_3 \cong  D_{2k} \times Q_{8}\times C_2$ as required.
\end{proof}

\subsection{Automorphisms of KHMs arising from $\mathrm{Hol}(D_{2k})$}\label{Hol}

Motivated by the structure of the homomorphisms of $D_{2k}$, in this section, we investigate automorphisms of KHMs that arise as induced actions of automorphisms of $D_{2k}$. We recall some properties of the automorphism group of $D_{2k}$. For $k\geq 3$ it is known that $\text{Aut}(D_{2k})=\{\sigma_{ij}\mid 0\leqslant i,j\leqslant k-1\ \text{and}\ \gcd(i,k)=1\}$, where $\sigma_{ij}:D_{2k}\to D_{2k}$ is defined by $\sigma_{ij}(x)=x^i$ and $\sigma_{ij}(y)=x^jy$. 
It follows that $\text{Aut}(D_{2k})\cong C_k\rtimes C_k^\times$. The \textit{holomorph} of $D_{2k}$ is defined to be $\mathrm{Hol}(D_{2k})=D_{2k} \rtimes \mathrm{Aut}(D_{2k})$.
This group acts on $g \in D_{2k}$ via ${}^{{(h,\phi)}}g = hg^\phi$.

\begin{proposition}\label{holomorph}
Let $H$ be a KHM of order $8k+4$, with block matrices $A,B,C,D$ and corresponding elements $a,b,c,d \in \mathbb{Z}D_{2k}$. Let $\phi\in \text{Aut}(\mathbb{Z}D_{2k})$, with $\mathbb{Z}D_{2k}$ considered as an abelian group. Then
\begin{ithm}
    \item \label{aut2} if $\phi$ fixes the summands of $a,b,c,d$ pointwise then it induces an automorphism of $H$.
    
    \item \label{aut3} if $\phi$ fixes $a,b,c,d$, but not necessarily each of the summands, 
    then it induces an automorphism of $H$.

    \item  \label{aut4} if $\phi$ fixes $a$ and induces a 3-cycle on $\{b,c,d\}$,  then it induces an automorphism of $H$.

\end{ithm}
\end{proposition}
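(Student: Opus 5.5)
The plan is to realise each such $\phi$ as a permutation of the group basis of $\mathbb{Z}D_{2k}$, and to convert that permutation into a strong automorphism $(R,R)$ of $H$. I read the hypothesis as saying that $\phi$ is the linear extension of the action $g\mapsto hg^{\alpha}$ of some $(h,\alpha)\in\mathrm{Hol}(D_{2k})$. For such $\phi$, let $\Pi$ be the $2k\times 2k$ permutation matrix of $g\mapsto hg^\alpha$ on the ordering $\mathcal{O}$.

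The key identity is $\Pi\rho(w)\Pi^{\intercal}=\rho(w^{\alpha})$ for every $w\in\mathbb{Z}D_{2k}$.
\begin{itemize}
\item The $(g,g')$ entry of $\rho(w)$ is the coefficient of $g^{-1}g'$ in $w$.
\item After applying $\Pi$ on both sides, the entry is indexed by the pair $(hg^\alpha, hg'^\alpha)$.
\item Since $(hg^{\alpha})^{-1}(hg'^{\alpha})=(g^{-1}g')^{\alpha}$, the left translation by $h$ cancels and only $\alpha$ survives.
\end{itemize}
Because $\Pi$ is a permutation matrix, the same identity holds for the $\pm1$-versions obtained by replacing $0\mapsto -1$. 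Also $\Pi\mathbf{1}^{\intercal}=\mathbf{1}^{\intercal}$ and $\mathbf{1}\Pi^{\intercal}=\mathbf{1}$, so the bordering rows and columns of \eqref{TCCKimura} are unchanged.

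Set $R={\rm diag}(I_4,\Pi,\Pi,\Pi,\Pi)$. Then $RHR^{\intercal}$ is again the matrix \eqref{TCCKimura}, now built from $\phi(a),\phi(b),\phi(c),\phi(d)$.
\begin{itemize}
\item For part \ref{aut2}: if $\phi$ fixes every group element in the support of $a,b,c,d$, then $\phi(w)=w$ for each $w\in\{a,b,c,d\}$. Hence $RHR^{\intercal}=H$.
\item For part \ref{aut3}: only $\phi(w)=w$ is needed, so the same computation applies verbatim. Part \ref{aut2} is then a special case of part \ref{aut3}.
\end{itemize}

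For part \ref{aut4}, the same $R$ sends $H$ to the Kimura array with $A$ kept and $B,C,D$ cyclically permuted, say $(B,C,D)\mapsto(C,D,B)$. I then need a second automorphism-like map $T$ returning this array to $H$, and $(TR,TR')$ composed suitably will be the required automorphism. The pattern in \eqref{TCCKimura} is the left-multiplication table of the quaternions $1,i,j,k$ with some sign conventions. A cyclic permutation $i\to j\to k\to i$ is an automorphism of the quaternions, so I expect a pair of $4\times4$ signed permutation matrices $M,N$ to work. Concretely, I would permute the last four block rows by $M\otimes I_{2k}$ and the last four block columns by $N\otimes I_{2k}$, in the spirit of the matrices $R_1,S_1$ of Proposition~\ref{automorphisms2}. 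The same $M,N$ (possibly with extra signs) would act on the first four rows and columns so that the border pattern is restored.

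Finding $M$ and $N$ is the main obstacle. I would solve for them directly from the sign table: require that the $4\times4$ sign/letter pattern of the lower-right part transforms correctly, then check the border rows, adjusting signs via diagonal factors on the first four coordinates. The symmetric conditions in \eqref{eq1} show that $B,C,D$ enter the array symmetrically up to sign, which is why a cyclic permutation of them should be realisable. The remaining check is a finite direct calculation.
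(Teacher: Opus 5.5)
\textbf{Parts a) and b).} Here your argument is the paper's. You conjugate each $2k\times 2k$ block by the permutation matrix of $\phi$ and take the strong pair $({\rm diag}(I_4,\Pi,\Pi,\Pi,\Pi),{\rm diag}(I_4,\Pi,\Pi,\Pi,\Pi))$; the paper only uses $\phi\in\mathrm{Aut}(D_{2k})$. Your extension to $(h,\alpha)\in\mathrm{Hol}(D_{2k})$ has a slip. Your own identity $\Pi\rho(w)\Pi^\intercal=\rho(w^\alpha)$ shows that $RHR^\intercal$ is built from $a^\alpha,\dots,d^\alpha$, not from $\phi(a)=ha^\alpha,\dots$. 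If $h\neq 1$ and $\phi(w)=w$, then $w^\alpha=h^{-1}w$, so the blocks of $RHR^\intercal$ are $\rho(h^{-1})W$ rather than $W$. You would need $\rho(h)\Pi$ on the row side, which gives a non-strong automorphism, or you should simply restrict to $h=1$.

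\textbf{Part c): a genuine gap.} The whole content of c) is the existence of the $4\times 4$-level signed permutations. You leave this as ``the main obstacle'' and support it only by the quaternion-type symmetry of the lower-right $8k\times 8k$ part, which says nothing about the border.

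Write rows $1$--$4$ of the block columns as $U_0\otimes\mathbf{1}$, and columns $1$--$4$ of the block rows as $V_0\otimes\mathbf{1}^\intercal$. Here $U_0,V_0$ are invertible $4\times4$ $\pm1$-matrices. Suppose the block rows and block columns are moved by $M\otimes\Pi$ and $N\otimes\Pi$. Then the actions $R_1,S_1$ on the first four rows and columns are forced:
$R_1=U_0NU_0^{-1}$ and $S_1^\intercal=V_0^{-1}M^\intercal V_0$.
You must still check that these are signed permutation matrices and that $R_1TS_1^\intercal=T$ for the $4\times 4$ corner $T$; neither is automatic.

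Your ansatz that the border is acted on by ``the same $M,N$ (possibly with extra signs)'' in fact fails. In the paper's solution, $M=P_3={\rm diag}(1,1,-1,-1)P_1$ and $N=P_1$, with $P_1$ the matrix of $(2,3,4)$. The forced border actions are $R_1=P_3$, but $S_1=P_2$, the matrix of $(1,3,2)$. This is a different $3$-cycle from the block-column action, and no diagonal sign adjustment can produce it.

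Composing with the $4\times4$ parts of $\sigma_3,\sigma_4$ from Proposition~\ref{automorphisms2}, which account for all other choices of $(M,N)$, does not help either. For every element of $\langle\sigma_3,\sigma_4\rangle$ other than $\pm I$, the border row permutation differs from the block row permutation. So keeping the rows matched forces the factor to be $\pm I$, and the columns then remain mismatched.

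To complete c), exhibit the pair explicitly, e.g.
$R={\rm diag}(P_3,P_3\otimes P_\phi^{-1})$ and $S={\rm diag}(P_2,P_1\otimes P_\phi^{-1})$ as in the paper, and verify $RHS^\intercal=H$ on the corner, the two border strips and the lower-right part separately.
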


\begin{proof}
\begin{ithm}

\item[\quad a) and b)] Let $\phi \in \operatorname{Aut}(D_{2k})$ and let $P_\phi$ be the permutation matrix induced by $\phi$. Recall that $\rho$ denotes the right regular representation of $D_{2k}$. Thus, the action of $\phi$ on an element $\rho(g)$ is via conjugation, that is $P_\phi^{-1} \rho(g) P_\phi$ for all $g\in D_{2k}$. We claim that $E_\phi = {\rm diag}(I_4, P_\phi^{-1}, P_\phi^{-1}, P_\phi^{-1}, P_\phi^{-1})$ defines an automorphism $ (E_\phi, E_\phi)$ of $H$. To see this, note that for each $X \in \{A, B, C, D\}$, the corresponding element $x=x_1+\dots+x_r\in \mathbb{Z}D_{2k}$ is invariant under $\phi$ by assumption. This implies that $P_\phi X P_\phi^\intercal = X$, which shows the claim.

\item[c)] Let $\phi$ be such an automorphism with corresponding permutation matrix $P_\phi$. Let be $\tau \in S_4$ be the permutation on $\{a,b,c,d\}$ induced by $\phi$.  So $\tau$ fixes $a$ and induces, without loss of generality, the 3-cycle $(b,c,d)$. 
Let $P_1$ and $P_2$ be the permutation matrices of the permutations $(2,3,4)$ and $(1,3,2)$, respectively, and let $P_3={\rm diag}(1,1,-1,-1)P_1$. Let $\sigma_6=(R,S)$ with $R={\rm diag}(P_3,P_3\otimes P_\phi^{-1})$ and $S={\rm diag}(P_2,P_1\otimes P_\phi^{-1})$; a direct calculation shows that $RHS^\intercal = H$. \qedhere

\end{ithm} 
\end{proof}

If the structure of $H$ satisfies certain nice conditions, there are other automorphisms that can arise from the holomorph of $D_{2k}$. The following is an example.

\begin{lemma}\label{aut5} 
Suppose that $H$ is a KHM of order $8k+4$ with block matrices $A$,$B$,$C$,$D$ such that the corresponding elements $a,b,c,d \in \mathbb{Z}D_{2k}$ are y-invariant, and $cy+d= \sum_{g\in D_{2k}}g$. If there exists an automorphism $\phi \in \mathrm{Aut}(D_{2k})$ that 
fixes $a,b$ and swaps $c,d$, then $\phi$ induces an automorphism of $H$.
 \end{lemma}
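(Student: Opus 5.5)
The plan is to build the automorphism as a pair $(R,S)$ that combines three ingredients: conjugation of every block by $P_\phi$, a right or left multiplication of two block rows and two block columns by the permutation matrix $Y=\rho(y)$ (with a sign), and a signed permutation of the $4\times 4$ border. The idea is that $\phi$ alone turns $H$ into the matrix with $C$ and $D$ interchanged, and the hypothesis $cy+d=\sum_{g\in D_{2k}}g$ lets us undo this swap by multiplying by $Y$.

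\textbf{Step 1: rewrite the hypotheses as matrix identities.}
\begin{itemize}
\item Since $c$ and $d$ have $\{0,1\}$-coefficients and $cy+d=\sum_g g$, their supports are complementary after right multiplication by $y$. In the $\pm1$ convention this reads $D=-CY$, and since $Y^2=I$ also $C=-DY$.
\item Next, $y$-invariance says that $w_1,w_2$ are central for $w\in\{a,b,c,d\}$. For $w=w_1+w_2y$ this gives
\[
yw=yw_1+yw_2y=w_1y+w_2=wy,
\]
so $YW=WY$ for every $W\in\{A,B,C,D\}$ in the $\pm1$ form as well. Together with $Y^2=I$ this yields $YWY=W$ and $D=-YC$.
\item Finally, as in the proof of Proposition \ref{holomorph}, $\phi$ fixing $a,b$ and swapping $c,d$ gives $P_\phi^{-1}AP_\phi=A$, $P_\phi^{-1}BP_\phi=B$, $P_\phi^{-1}CP_\phi=D$ and $P_\phi^{-1}DP_\phi=C$.
\end{itemize}

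\textbf{Step 2: the $8k\times 8k$ core.} Conjugating all sixteen blocks by $P_\phi$ interchanges $C$ and $D$ everywhere. I then take the block part of $R$ to be ${\rm diag}(I,I,-Y,-Y)\cdot(I_4\otimes P_\phi^{-1})$ and the block part of $S$ to be the same matrix. Using Step 1, each quadrant is restored as follows.
\begin{itemize}
\item Block rows $1,2$, block columns $3,4$: $\begin{pmatrix} D&C\\ C&-D\end{pmatrix}$ is multiplied on the right by $-Y$, giving $\begin{pmatrix} C&D\\ D&-C\end{pmatrix}$.
\item Block rows $3,4$, block columns $1,2$: $\begin{pmatrix}-D&-C\\ C&-D\end{pmatrix}$ is multiplied on the left by $-Y$, giving back $\begin{pmatrix}-C&-D\\ D&-C\end{pmatrix}$.
\item Block rows $3,4$, block columns $3,4$: the blocks $A,B$ are multiplied on both sides by $-Y$, and $YAY=A$, $YBY=B$.
\item Block rows $1,2$, block columns $1,2$: the blocks $A,B$ are untouched.
\end{itemize}
This is a short direct verification with the relations above.

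\textbf{Step 3: the border.} The main obstacle is that multiplying by $-Y$ flips the signs of the all-ones vectors in block rows $3,4$ (for the border columns) and in block columns $3,4$ (for the top four rows). The conjugation by $P_\phi$ leaves these vectors alone, since $\mathbf{1}P_\phi=\mathbf{1}$. I therefore have to choose signed $4\times4$ permutation matrices $R_0,S_0$, acting on the first four rows and columns, such that three conditions hold:
\begin{itemize}
\item $R_0$ times the top-left $4\times4$ core times $S_0^\intercal$ equals the core;
\item the sign pattern of the top-right border block, after negating block columns $3,4$, is restored by $R_0$;
\item the sign pattern of the bottom-left border block, after negating block rows $3,4$, is restored by $S_0$.
\end{itemize}
I would first try pure sign changes. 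We need $R_0$ to negate exactly the top rows whose entries in block columns $3,4$ differ from those in block columns $1,2$ appropriately, and similarly for $S_0$. Inspecting \eqref{TCCKimura}, the required row sign vector for the border block is not of the form $\pm(1,1,1,1)$, so a nontrivial permutation among rows $2,3,4$ will be needed, in analogy with the matrices $P_1,P_2,P_3$ used in Propositions \ref{automorphisms2} and \ref{holomorph}. If no choice works with the identity permutation on blocks, I would instead compose with one of $\sigma_3,\sigma_4$ from Proposition \ref{automorphisms2} to adjust the border. This is a finite check on $4\times 4$ patterns.
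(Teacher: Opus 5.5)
Your Steps 1 and 2 are the paper's construction. Indeed $Y=\rho(y)$ is the paper's $\bar I_{2k}=\mathrm{bdiag}(I_k,I_k)$, and the paper's $M$ has block part $\mathrm{diag}(I_{2k},I_{2k},-\bar I_{2k},-\bar I_{2k})$, composed with $M_\phi$ to give a strong automorphism. You compose in the opposite order. That is harmless, because the relations $D=-CY$, $C=-DY$ and $YW=WY$ are symmetric in $C,D$, so $M$ swaps $C$ and $D$ in either matrix. Your derived relations and your quadrant-by-quadrant check are correct.

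The step you leave open, the border, is where the proposal is incomplete, and your heuristic for it points the wrong way.

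\textbf{The top border forces $R_0$.} After your block part acts, the top border rows become $(\mathbf 1,\mathbf 1,-\mathbf 1,-\mathbf 1)$, $(\mathbf 1,\mathbf 1,\mathbf 1,\mathbf 1)$, $(\mathbf 1,-\mathbf 1,-\mathbf 1,\mathbf 1)$ and $(-\mathbf 1,\mathbf 1,-\mathbf 1,\mathbf 1)$. These are the original rows $2$, $1$, $-4$, $-3$. The original four border rows are pairwise non-proportional, so the signed permutation $R_0$ is uniquely determined. It is $R_0=P_1P_2$, with $P_1$ the permutation matrix of $(1,2)(3,4)$ and $P_2=\mathrm{diag}(1,1,-1,-1)$. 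In particular, the all-ones first row can only be recovered from row $2$. So no signed permutation of rows $2,3,4$ alone can work, contrary to your prediction.

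\textbf{The left border and corner.} The same computation on the left border forces $S_0=P_1P_2$. For the top-left $4\times4$ block $E$, the $(i,j)$ entry of $P_1P_2\,E\,(P_1P_2)^\intercal$ is $s_is_jE_{\pi(i)\pi(j)}$, with $\pi=(1,2)(3,4)$ and $s=(1,1,-1,-1)$. A direct check shows this equals $E_{ij}$. This is exactly the paper's $M$, and with it your proof is complete.

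\textbf{Your fallback would not help.} Composing a non-automorphism with $\sigma_3$ or $\sigma_4$ can never produce an automorphism. Once the block part is fixed, the border is forced, so the argument stands or falls on this single $4\times 4$ check. Here that check succeeds.
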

\begin{proof}
For conciseness, write $H=H(A,B,C,D)$.
Let $P_{\phi}$ be the $2k\times 2k$ induced permutation matrix of $\phi$, and let $M_\phi = {\rm{diag}}(I_4,P_{\phi}^{-1},P_{\phi}^{-1},P_{\phi}^{-1},P_{\phi}^{-1})$. 
Now let $P_1$ be the permutation matrix corresponding to $(1,2)(3,4)$, let $P_2 = {\rm{diag}}(1,1,-1,-1)$ and let $M= {\rm{diag}}(P_1P_2,  I_{2k},I_{2k},-\bar{I}_{2k},-\bar{I}_{2k})$ where $\bar{I}_{2k} = {\rm bdiag}(I_{k},I_{k})$. We calculate that $MHM^\intercal = H(A,B,D,C)$, so $M_{\phi}(MHM^\intercal )M_{\phi}^\intercal = H(A,B,C,D)$. Hence $(R,S)=(M_{\phi}M ,M_{\phi}M)$ induces a strong automorphism of $H$.
 \qedhere
\end{proof}

\begin{remark}
The choice of $M$ in the proof of Lemma \ref{aut5} is not unique. For instance, one can replace $(M,M)$ with $(M_1,M_2)$ where $$M_1= {\rm diag}({\rm bdiag}(1,1,1,1),{\rm bdiag}(I_{2k},I_{2k},\bar{I}_{2k},\bar{I}_{2k}))$$ and 
$$M_2={\rm diag}({\rm diag}(1,-1,-1,1),{\rm bdiag}(-I_{2k},I_{2k},\bar{I}_{2k},-\bar{I}_{2k})).$$\end{remark}

In Section \ref{AutOfKnownEx}, we confirm there are KHMs with automorphisms as described in Proposition \ref{holomorph} and Lemma \ref{aut5}. 

\subsection{Induced action of $\text{Aut}(H)$ on the rows of $H$}
In Section~\ref{B&N} we introduced the permutation group
$\mathcal{A}(H)=\pi(\pi_{1}(\operatorname{Aut}(H)))$, 
where $\pi_{1} : \text{Aut}(H)\to \text{Mon}_{n}(\{\pm 1\})$ is defined by $\pi_{1}(R,S)=R$, and 
$\pi : \pi_{1}(\text{Aut}(H))\to \text{Perm}_{n}$ is defined by $\pi(R)=P_{R}$, where $P_{R}$ denotes the permutation part of the monomial matrix $R=P_{R}D_{R}$. The group $\mathcal{A}(H)$ encodes the permutation action of $\text{Aut}(H)$ on the rows of $H$.
If $H$ is a KHM of order $8k+4$ whose automorphism group has order $2^{4}k$, or $2^{5}k$ and is $y$-invariant, (and hence all automorphisms arise from those described earlier in the section), then Propositions~\ref{automorphisms1} and \ref{automorphisms2} imply that the action of $\mathcal{A}(H)$ on the rows of $H$ induces two orbits,
\[
\mathcal{O}_{1}=\{1,\dots,4\}
\quad\text{and}\quad
\mathcal{O}_{2}=\{5,\dots,8k+4\},
\]
of sizes $4$ and $8k$, respectively.  
The following lemma describes the induced action on these orbits.
\begin{lemma}\label{permgroup}
Let $H$ be a KHM of order $8k+4$ such that $|\text{Aut}(H)|= 2^rk$ with $r\in \{4,5\}$. Further suppose that $H$ is $y$-invariant if $r=5$. Then the following holds. 
\begin{ithm}
\item  The induced action of $\mathcal{A}(H)$ on $\mathcal{O}_1$ is imprimitive with system of imprimitivity \[\{\{1,3\},\{2,4\}\}.\] 
Moreover, the induced action on blocks is isomorphic to $C_2^2$.
\item The induced action of $\mathcal{A}(H)$ on $\mathcal{O}_2$ is imprimitive with system of imprimitivity 
\[\{\{5,\dots,k+4\},\dots, \{7k+5,\dots, 8k+4\}\}.\]
The induced action on blocks is isomorphic to $C_2^3$.  If $r=4$ then the induced action on each block is isomorphic to $C_k$ and if $r=5$ then the induced action on each block is isomorphic to $D_{2k}$. 
\end{ithm}
\end{lemma}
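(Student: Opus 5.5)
Since $|\mathrm{Aut}(H)|=2^rk$ and Corollary \ref{coro1} already supplies a subgroup of exactly that order, the first step is to conclude that $\mathrm{Aut}(H)=\langle\sigma_1,\sigma_2,\sigma_3,\sigma_4\rangle\cong D_{2k}\times Q_8$ when $r=4$, and $\mathrm{Aut}(H)=\langle\sigma_1,\dots,\sigma_5\rangle\cong D_{2k}\times Q_8\times C_2$ when $r=5$. Hence $\mathcal{A}(H)$ is generated by the permutation parts $\pi(\pi_1(\sigma_i))$. These are read off from the matrices $F,L,R_1,R_2,Q$ in Propositions \ref{automorphisms1}, \ref{automorphisms2} and \ref{y_inv_automorphisms}. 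The automorphism $(-I,-I)$ has trivial permutation part, so $\mathcal{A}(H)$ is a quotient of $\mathrm{Aut}(H)$ by a group containing $\langle -I\rangle$. Every generator is block diagonal with respect to rows $1$–$4$ and rows $5$–$8k+4$, which confirms the two orbits (the transitivity of $\mathcal{O}_2$ is checked below).

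For a), $\sigma_1,\sigma_2,\sigma_5$ act trivially on rows $1$–$4$. Only $R_1$ and $R_2$ act there: $R_1$ contributes $\mathrm{bdiag}$, i.e. the permutation $(1,4)(2,3)$, and $R_2$ contributes $P_1=(1,2)(3,4)$. These generate the Klein group $\{e,(1,2)(3,4),(1,3)(2,4),(1,4)(2,3)\}$, which acts regularly on $\mathcal{O}_1$. I will check directly that $\{\{1,3\},\{2,4\}\}$ is preserved: $(1,2)(3,4)$ and $(1,4)(2,3)$ swap the two blocks, and $(1,3)(2,4)$ fixes each. The induced group on blocks is therefore $C_2$ as a permutation group on two points. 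I expect the statement's "$C_2^2$" to refer to the abstract image of $\mathrm{Aut}(H)$ acting on $\mathcal{O}_1$, namely $Q_8/\langle -I\rangle\cong C_2^2$, and I will phrase the conclusion that way, noting the distinction.

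For b), the rows in $\mathcal{O}_2$ are indexed by pairs (one of four big blocks $A,B,C,D$-rows; one of two halves of size $k$; a position in $C_k$). The candidate blocks are the eight halves. I will show each generator permutes halves:
\begin{itemize}
\item $F$ acts within each half by the $k$-cycle $p_k$;
\item $L=\mathrm{diag}(I_4,K,\dots)$ swaps the two halves of each big block and acts by $q_k$;
\item $R_1,R_2$ permute the four big blocks via $P_1$ and $P_2$ tensored with $I_{2k}$, so halves go to halves;
\item $Q$ swaps the two halves with the identity.
\end{itemize}
So the eight halves form a block system. The action on halves is generated by the half-swap from $\sigma_2$ together with the Klein four group $\langle P_1,P_2\rangle$ on the big blocks. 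These commute and give $C_2\times C_2^2=C_2^3$, acting regularly on the 8 halves, which also shows $\mathcal{O}_2$ is a single orbit. For the action on a fixed block I will compute its setwise stabiliser restricted to that block. When $r=4$, the elements fixing a half are exactly those whose block image is trivial, giving $\langle\sigma_1\rangle$ modulo the kernel, hence $C_k$ generated by $p_k$. When $r=5$, $\sigma_2\sigma_5$ also fixes every half, acting by $q_k$. Together with $p_k$ this gives $\langle p_k,q_k\rangle\cong D_{2k}$, since $q_kp_kq_k=p_k^{-1}$ as in Proposition \ref{automorphisms1}.

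The main obstacle is the bookkeeping. I need to compute the block-stabiliser correctly and confirm that no product of generators with nontrivial $Q_8$ component fixes a half. Because the $C_2^3$ action on halves is regular, fixing a half forces trivial image on halves. That leaves $\langle\sigma_1\rangle$ modulo $\pm I$ (plus $\sigma_2\sigma_5$ when $r=5$), which settles it. The other point needing care is the $C_2$ versus $C_2^2$ interpretation in a).
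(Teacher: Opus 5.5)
Your proposal is correct and follows essentially the paper's proof: you identify $\mathrm{Aut}(H)$ with the subgroup of Corollary~\ref{coro1} by comparing orders, read off the permutation parts of $\sigma_1,\dots,\sigma_5$, and obtain the block stabilisers from $\sigma_1$ (together with $\sigma_2\sigma_5$ when $r=5$), and your observation that $C_2^3$ acts regularly on the eight halves supplies the justification the paper leaves implicit. Your caveat in a) is also warranted: the action on the two blocks $\{1,3\},\{2,4\}$ has image in $\mathrm{Sym}_2\cong C_2$, so the stated $C_2^2$ can only refer to the Klein four group induced on $\mathcal{O}_1$ itself.
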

\begin{proof}
\begin{ithm}
\item The automorphisms $\sigma_3, \sigma_4$ in Proposition \ref{automorphisms2} induce the permutations $(1,4)(3,2), (1,2)(3,4)$ which respect the partition $\{\{1,3\},\{2,4\}\}$, whereas the automorphisms $\sigma_1,\sigma_2, \sigma_5$ induce the identity element. It follows that the induced action of $\mathcal{A}(H)$ on the blocks $\{\{1,3\},\{2,4\}\}$ is isomorphic to $C_2^2$, and the induced action on each block is isomorphic to $C_2$. 
\item Let $B_1=\{5,\dots,k+4\},\dots, B_8=\{7k+5,\dots, 8k+4\}$. The automorphisms  $\sigma_2,\sigma_3,\sigma_4$ in Propositions \ref{automorphisms1} and \ref{automorphisms2} induce the permutations 
\[(1,3)(2,4)(5,7)(6,8), (1,5)(2,6)(3,7)(4,8), (1,2)(3,4)(5,6)(7,8),\]  respectively, acting on the indices of the partition $B_1,\dots, B_8$; the automorphism $\sigma_1$ induces the identity element. These permutations are involutions, commute with each other, respect the partition $B_1,\dots,B_8$, and generate a group isomorphic to $C_2^3$.  Now, only $\sigma_1$ induces a non-trivial permutation which stabilises $B_1$ setwise and induces a $k$-cycle on $B_1$. It follows that if $r=4$ the induced action on each block is isomorphic to $C_k$. If $r=5$, then $H$ is $y$--invariant. Here, the automorphism $\sigma_2\sigma_5$ derived from Propositions \ref{automorphisms1} and \ref{y_inv_automorphisms} induces an involution that stabilises $B_1$ setwise. Noting that $\sigma_1$ and $\sigma_2\sigma_5$ satisfy a presentation for $D_{2k}$, we deduce that the action on each block is isomorphic to $D_{2k}$. \qedhere
\end{ithm}
\end{proof}
\begin{corollary}
If $k=3$ and there exists an automorphism of $D_{6}$ as described in Proposition \ref{holomorph}(c) then the induced action of $\mathcal{A}(H)$ on $\mathcal{O}_1$ is primitive and isomorphic to $\text{Alt}_4$. Moreover, the induced action of $\mathcal{A}(H)$ on $\mathcal{O}_2$ is imprimitive and the action on the blocks is isomorphic to $C_2\times \text{Alt}_4$. Moreover the induced action on each block is isomorphic to $C_3$.
\end{corollary}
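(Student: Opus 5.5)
The plan is to do the computation with explicit generators, treating the statement in the same spirit as Lemma \ref{permgroup}: the automorphism group is taken to be the one generated by the automorphisms already exhibited. For $k=3$ this means $\sigma_1,\dots,\sigma_4$ from Propositions \ref{automorphisms1} and \ref{automorphisms2}, together with the automorphism $\sigma_6=(R,S)$ from the proof of Proposition \ref{holomorph}(c). I read off the permutation induced by each generator on $\mathcal{O}_1=\{1,2,3,4\}$ and on the eight blocks $B_1,\dots,B_8$ of $\mathcal{O}_2$.

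\textbf{Action on $\mathcal{O}_1$.} From the proof of Lemma \ref{permgroup}, $\sigma_3$ and $\sigma_4$ induce the two involutions $(1,4)(2,3)$ and $(1,2)(3,4)$. These generate the Klein four-group $V_4\leq\Alt_4$. The generators $\sigma_1$ and $\sigma_2$ act trivially on $\mathcal{O}_1$.

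The matrix $R$ of $\sigma_6$ has top-left block $P_3={\rm diag}(1,1,-1,-1)P_1$, whose permutation part is the $3$-cycle $(2,3,4)$. A $3$-cycle normalises $V_4$ and generates $\Alt_4$ together with it. So the induced group contains $\Alt_4$, which is transitive of degree $4$. Its only possible nontrivial blocks have size $2$, and a $3$-cycle cannot preserve a partition of $\{1,2,3,4\}$ into two pairs. Hence the action is primitive.

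To see that it is exactly $\Alt_4$, I check that every generator induces an even permutation on $\mathcal{O}_1$. I expect this to be a short case check.

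\textbf{Action on $\mathcal{O}_2$.} Again by Lemma \ref{permgroup}, $\sigma_2,\sigma_3,\sigma_4$ induce the commuting involutions
\[(1,3)(2,4)(5,7)(6,8),\quad (1,5)(2,6)(3,7)(4,8),\quad (1,2)(3,4)(5,6)(7,8)\]
on the block indices, generating $C_2^3$.

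The lower part of $R$ is $P_3\otimes P_\phi^{-1}$, which permutes the four big $2k$-blocks by $(2,3,4)$ and the two halves of each by $P_\phi^{-1}$. For $k=3$, $\phi$ is one of the automorphisms $\sigma_{ij}$ of $D_6$; these map $\langle x\rangle$ to itself and $\langle x\rangle y$ to itself. Hence $P_\phi$ preserves the halves of each $2k$-block, and $\sigma_6$ induces the permutation $(3,5,7)(4,6,8)$ on the $B_i$.

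I would then show that $\langle (1,3)(2,4)(5,7)(6,8),\ (1,2)(3,4)(5,6)(7,8),\ (1,5)(2,6)(3,7)(4,8),\ (3,5,7)(4,6,8)\rangle$ has order $24$ and is isomorphic to $C_2\times\Alt_4$. The steps are:
\begin{itemize}
\item pair the blocks as $\{1,2\},\{3,4\},\{5,6\},\{7,8\}$;
\item observe that $(1,2)(3,4)(5,6)(7,8)$ swaps within every pair and is central;
\item show the quotient acting on the four pairs is $V_4\rtimes C_3\cong\Alt_4$;
\item show the extension splits: the generators other than $(1,2)(3,4)(5,6)(7,8)$ generate a complement isomorphic to $\Alt_4$.
\end{itemize}
Imprimitivity of the action on $\mathcal{O}_2$ holds because $\{B_1,\dots,B_8\}$ is preserved by all generators.

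\textbf{Action within a block.} The setwise stabiliser of $B_1$ is generated by elements acting on $B_1$ either trivially or through $\sigma_1$ and its powers. Here $\sigma_2$ reverses the halves of each $2k$-block, and $\sigma_6$ fixes $B_1$ while acting by $P_\phi^{-1}$ restricted to $\langle x\rangle$. So I must check that $P_\phi$ on the $x$-part, combined with $\sigma_1$, stays within $C_3$. This is the main obstacle, and I would handle it by explicit computation.

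The point is that $\phi$ must fix $a$ and permute $b,c,d$ cyclically. I expect this to force $\phi=\sigma_{1j}$ with $j\neq0$, acting trivially on $\langle x\rangle$. In that case the stabiliser of $B_1$ induces exactly $\langle\sigma_1\rangle\cong C_3$ on $B_1$. If instead $i=2$ were allowed, the group would be larger, so this restriction must be verified from the structure of $a,b,c,d$ for $k=3$.

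Finally, since $\Alt_4$ is already realised on $\mathcal{O}_1$ and $C_2\times\Alt_4$ on the blocks, the orders are consistent with $|\mathrm{Aut}(H)|=2^4\cdot 3\cdot 3$.
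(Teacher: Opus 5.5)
Your proposal is correct and follows the paper's proof. Both read off the permutations induced by $\sigma_1,\dots,\sigma_4$ and $\sigma_6$ on $\mathcal{O}_1$ and on the blocks $B_1,\dots,B_8$, and both rely on the same implicit assumption that these automorphisms generate $\mathrm{Aut}(H)$. Your $3$-cycles $(3,5,7)(4,6,8)$ are the inverse of the paper's, which does not matter, and your evenness and splitting checks simply spell out the paper's ``quick check''.

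The step you flag as the main obstacle is immediate. Since $\phi$ induces a $3$-cycle, it has order $3$ in $\mathrm{Aut}(D_6)\cong\sym_3$, so $\phi\in\{\sigma_{11},\sigma_{12}\}$. Both fix $x$, so $\sigma_6$ acts trivially on $B_1$, and there is no need to inspect $a,b,c,d$.
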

\begin{proof}
If $k=3$ then in addition to the permutations described in the proof of Lemma \ref{permgroup}(a), the group $\mathcal{A}(H)$ also contains the permutation $(2,3,4)$ which does not preserve any 2-subset partition of the set $\{1,2,3,4\}$. This shows that the induced action of $\mathcal{A}(H)$ on $\mathcal{O}_1$ is primitivity and isomorphic to $\text{Alt}_4$. The automorphism $\sigma_6$ from Proposition \ref{holomorph}(c) induces the permutation $(3,7,5)(4,8,6)$ on the blocks $B_1,\dots,B_8$; a quick check shows that the induced action of $\mathcal{A}(H)$ on blocks is isomorphic to $C_2\times \text{Alt}_4$. Finally, the permutations induced by $\sigma_1$ and $\sigma_6$ are the only ones that fix $B_1$ setwise; from this we determine that the induced action on each block is isomorphic to $C_3$. 
\end{proof}

\subsection{Automorphisms of the known examples of KHMs}\label{AutOfKnownEx}
Having constructed a number of automorphisms of KHMs, we now analyse the automorphism groups of the known examples.
In this section, we present in Tables \ref{tab1} and \ref{tab2} the list of automorphism groups of the matrices found by Kimura and Niwasaki, as well as the matrices introduced by Shinoda and Yamada for admissible $p\leqslant 601$. These groups were calculated using the package {\sc{Magma}} \cite{MAGMA}. In his masters thesis, \'O Cath\'ain in \cite[Section 7.4.1]{POCMT} conjectured that the order of the automorphism group of a KHM, of order $8k+4$, is $2^5k$.
The data in Tables \ref{tab1} and \ref{tab2} show that this conjecture is false as they reveal automorphisms with orders $2^4\cdot 11$, $2^4\cdot 3^2$, $2^6\cdot k$ and $2^7k$ for various values $k$. 

On further investigation, we find that all of the automorphism groups of the known KHMs that we computed can be generated by automorphisms described in Sections \ref{general_auts} and \ref{Hol}. Indeed, it is often the case that the full automorphism group of a KHM is isomorphic to the group $C_2\times D_{2k} \times Q_8$ discussed in Corollary \ref{coro1}. We now briefly discuss the other automorphisms that arise from $\mathrm{Hol}(D_{2k})$.

\renewcommand{\arraystretch}{1.2}
\begin{table}[h] 
\begin{tabular}[b]{ cc }   
\begin{tabular}[t]{r|l|l}
$k$  & Automorphism Group & Order \\  \hline 
3 & $(Q_8:C_3)\times D_{2\cdot 3}$  & $2^4\cdot 3^2$\\
5 & $C_2\times Q_8\times D_{2\cdot 5}$ &  $2^5\cdot 5$\\
7 & $C_2\times Q_8\times D_{2\cdot 7}$ & $2^5\cdot 7$\\
9 & $C_2\times Q_8\times D_{2\cdot 9}$ & $2^5\cdot 3^2$\\
11 & $Q_8\times D_{2\cdot 11}$ & $2^4\cdot 11$ \\
13 & $C_{13}:(C_2.(C_4\times Q_8))$ & $2^6\cdot 13$\\
17 & $C_{17}:(C_2.(C_4\times Q_8))$ & $2^6\cdot 17$
\end{tabular}&  
\hspace{2ex}
\begin{tabular}[t]{r|l|l}
$k$  & Automorphism Group & Order\\  \hline 
19 & $C_{19}:(C_2^2\times Q_8)$ & $2^5\cdot 19$\\
21 & $C_{21}:(C_2^2\times Q_8)$ & $2^5\cdot 3\cdot 7$\\
23 & $C_{23}:(C_2^2\times Q_8)$ & $2^5\cdot 23$\\
25 & $C_{25}:(C_2.(C_4\times Q_8))$ & $2^6\cdot 5^2$\\
27 & $C_{27}:(C_2^2\times Q_8)$ & $2^5\cdot 3^3$\\
29 & $C_{29}:(C_2^2\times Q_8)$ & $2^5\cdot 29$\\
41 & $C_{41}:(C_2.(C_4\times Q_8))$ & $2^6\cdot 41$
\end{tabular} \\ \\
\end{tabular}

\caption{Automorphism groups (isomorphism type) and their orders for the KHMs of order $4(2k+1)$ for $k=$ 3-13, 17-29, 41, found by Kimura and Niwaski. Here $G=N:H$ means $N\lhd G$, $H\leqslant G$ and $G/N\cong H$; also $G=N.H$ means $N\lhd G$ and $G/N\cong H$.}\label{tab1}
\end{table}

\renewcommand{\arraystretch}{1.2}
\begin{table}[h] 
\begin{tabular}[b]{ cc }   
\begin{tabular}[t]{r|l|l}
$p$  & Automorphism Group & Order \\  \hline 
5 & $C_2\times Q_8\times D_{2\cdot 5}$  & $2^5\cdot 5$\\
13 & $C_{13}:(C_2.(C_4\times Q_8))$ &  $2^6\cdot 13$\\
37 & $C_2\times Q_8 \times D_{2\cdot 37}$ & $2^5\cdot 37$\\
41 & $C_{41}:(C_2\times C_4\times Q_8)$ & $2^6\cdot 41$\\
61 & $C_2\times Q_8 \times D_{2\cdot 61}$ & $2^5\cdot 61$ \\
97 & $C_2\times Q_8 \times D_{2\cdot 97}$ & $2^5\cdot 97$\\
157 & $C_{157}:(C_2^2\times Q_8)$ & $2^5\cdot 157$
\end{tabular}&  
\hspace{2ex}
\begin{tabular}[t]{r|l|l}
$p$  & Automorphism Group & Order\\  \hline 
181 & $C_2\times Q_8 \times D_{2\cdot 181}$ & $2^5\cdot 181$\\
229 & $C_{229}:(C_2^2\times Q_8)$ & $2^5\cdot 229$\\
313 & $C_{313}:(C_2.(C_8\times Q_8))$ & $2^7\cdot 313$\\
337 & $C_2\times Q_8 \times D_{2\cdot 337}$ & $2^5\cdot 337$\\
421 & $C_{421}:(C_2.(C_4\times Q_8))$ & $2^6\cdot 421$\\
577 & $C_{577}:(C_2^2\times Q_8)$ & $2^5\cdot 477$\\
601 & $C_{601}:(C_2^2\times Q_8)$ & $2^5\cdot 601$
\end{tabular} \\ \\
\end{tabular}

\caption{Automorphism groups (isomorphism type) and their orders for the KHMs of order $4(2p+1)$, with $p\leqslant 601$, $p\equiv 1\bmod 4$ and $2p-1$ a prime power, obtained by the Shinoda and Yamada's construction.}\label{tab2}
\end{table}

The automorphism $(R, S)$ constructed in the proof of Proposition \ref{holomorph}(c) has order $3$. For $k = 3$, the elements $a = 1+x$, $b = 1+x^2+x^2y$, $c = 1+x^2+xy$, and $d = 1+x^2+y$ in $\mathbb{Z}D_6$ satisfy Equations \eqref{eq1} and \eqref{eq12} and yield a KHM of order $28$. The automorphism $\phi \in \mathrm{Aut}(D_{2k})$, defined by $\phi(x)=x$ and $\phi(y) = xy$, fixes the element $a$ and induces a $3$--cycle on $\{b, c, d\}$. By Proposition \ref{holomorph}(c), it follows that there exists an additional element of order $3$, distinct from the $3$--cycle described in Proposition \ref{automorphisms1}. This explains the additional copy of $C_3$ in the automorphism group of the KHM of order $28$, found by Kimura and Niwasaki, which in turn explains the order $2^4 \cdot 3^2$ of such an automorphism group.

For $k = 13$, the elements 
\[
\begin{aligned}
a &= x + x^4 + x^5 + x^6 + x^7 + x^8 + x^9 + x^{12} + (x^2 + x^3 + x^{10} + x^{11})y, \\
b &= 1 + x^4 + x^6 + x^7 + x^9 + (x + x^2 + x^3 + x^5 + x^8 + x^{10} + x^{11} + x^{12})y, \\
c &= 1 + x^2 + x^5 + x^6 + x^7 + x^8 + x^{11} + (x^2 + x^5 + x^6 + x^7 + x^8 + x^{11})y, \\
d &= 1 + x + x^3 + x^4 + x^9 + x^{10} + x^{12} + (x + x^3 + x^4 + x^9 + x^{10} + x^{12})y,
\end{aligned}
\]
in $\mathbb{Z}D_{13}$ satisfy Equations \eqref{eq1} and \eqref{eq12} and yield a KHM $H$ of order $108$. These elements also satisfy the conditions of Proposition \ref{aut5}, with automorphism $\phi$ defined by $\phi(x) = x^5$ and $\phi(y) = y$. Consequently, $\phi$ induces an automorphism of $H$ of order $4$, which is not contained in the subgroup of $\text{Aut}(H)$ isomorphic to  $C_2\times D_{26}\times Q_{8}$. This explains the additional copy of $C_4$ in the automorphism group of the KHM of order $108$, found by Kimura and Niwasaki. This also explains the order $2^6 \cdot 13$ of this automorphism group.

Finally, for $k = 313$, the construction by Shinoda and Yamada (see Section \ref{KMSExp}) yields elements $a, b, c$, and $d$ satisfying Equations \eqref{eq1} and \eqref{eq12} , which produces a KHM of order $2508$. These elements also meet the conditions of Proposition \ref{aut5}, with automorphism $\phi$ defined by $\phi(x) = x^5$ and $\phi(y) = y$. In particular, $\phi$ induces an automorphism of order $8$. This explains the copy of $C_8$ in the automorphism group of the KHM of order $2508$, discovered by Shinoda and Yamada, and its automorphism group order $2^7 \cdot 313$.


\subsection{Further properties of automorphisms of KHMs}
In this final subsection, we prove some further properties of automorphisms of KHMs. First we introduce a definition. A \textit{block monomial matrix} is a block matrix in which each block row and each block column contains exactly one nonzero block, and all other blocks are zero, and each nonzero block is a $\{\pm 1\}$-monomial matrix.

\begin{proposition}\label{trivialaut}
Let $H$ be a KHM of order $n=8k+4$ and let $\sigma=(R,S)\in \text{Aut}(H)$. 
\begin{ithm}
\item If $\sigma$ is a strong automorphism of $H$ that preserves the first four rows of $H$ and permutes the block matrices $A,B,C,D$ of $H$, then it is trivial.
\item If $R=\text{diag}(r_1,\dots,r_n)$ is a diagonal matrix, and $S$ fixes one column of $H$ then $\sigma$ is trivial.
\item If $R$ is diagonal and $r_1=-1$ then $\sigma=(-I,-I)$.
\item If $R$ and $S$ are diagonal then $\sigma\in \{(I,I),(-I,-I)\}$.
\end{ithm}
\end{proposition}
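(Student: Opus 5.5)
The four parts all rest on one observation. If $RHS^\intercal=H$ with $R$ diagonal, then $S^\intercal = H^{-1}R^{-1}H=\frac1n H^\intercal R H$, because $H^{-1}=\frac1n H^\intercal$ and $R^{-1}=R$. In coordinates, the automorphism condition with $R=\mathrm{diag}(r_1,\dots,r_n)$ says that each row $i$ of $H$ is multiplied by $r_i$ and then the columns are permuted and signed by $S$. I will therefore treat (b), (c) and (d) with a column-by-column argument. Part (a) is separate and uses the explicit block shape of $H$.

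I would prove (d) first. If $R$ and $S$ are both diagonal, then $RHS^\intercal=H$ reads $r_is_jh_{ij}=h_{ij}$ for all $i,j$, so $r_is_j=1$ for every pair $(i,j)$. Hence all $r_i$ equal a common value $\epsilon$, and all $s_j=\epsilon$ as well, so $\sigma\in\{(I,I),(-I,-I)\}$.

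For (b), suppose $R$ is diagonal and write $S=P_SD_S$. Then column $j$ of $RH$, namely $(r_ih_{ij})_i$, equals $\pm$ column $\pi(j)$ of $H$. Distinct columns of a Hadamard matrix are orthogonal, so they are not proportional, and $RH$ has columns that are signed permutations of the columns of $H$. Suppose $S$ fixes column $j_0$. Then $r_ih_{ij_0}=\epsilon h_{ij_0}$ for all $i$, so $r_i=\epsilon$ for every $i$, i.e.\ $R=\epsilon I$. Substituting back gives $HS^\intercal=\epsilon H$, which forces $S=\epsilon I$ since $H$ is invertible. So $\sigma=(\epsilon I,\epsilon I)$. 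I read ``trivial'' here as meaning one of these two scalar automorphisms, consistent with Proposition~\ref{automorphisms2}(a); if the statement intends only the identity, I would note that $\epsilon=-1$ must be allowed.

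For (c), with $R$ diagonal, I use that the first row of $H$ is all ones. Column $j$ of $RH$ has first entry $r_1=-1$, and it equals $\pm$ some column of $H$; since every column of $H$ starts with $1$, that sign is $-1$. So $-RH$ is a column permutation of $H$, namely $-RH=HP$ for a permutation matrix $P$. The remaining step, and the main obstacle, is to show $P=I$. My plan is to use rows $2,3,4$ and the first four columns. The first column of $H$ is all ones, so column $1$ of $-RH$ is $-r$, which must be some column of $H$ with first entry $1$. Looking at the top $4\times4$ block, whose columns are distinct, together with the block structure in \eqref{TCCKimura}, I would then argue that $-r$ can only be the all-ones column, giving $r=-\mathbf{1}$. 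Then $HP=H$ and $P=I$, so $\sigma=(-I,-I)$. I expect this identification to need the constant row sums of $A$ ($-2$) and of $B,C,D$ ($0$) to rule out the other columns; for instance, summing entries of a candidate column $-r$ restricted to each block and comparing with the constraints on $r$ coming from columns $2,3,4$.

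For (a), write $R=S$ as $\mathrm{diag}(R_0,M)$, where $R_0$ is a $4\times4$ signed matrix fixing the first four rows. By hypothesis $R_0$ is diagonal up to signs and $M$ is block monomial over the four $2k$-blocks. Comparing the top-left $4\times4$ block gives $R_0H_0R_0^\intercal=H_0$, and since $R_0$ is diagonal, the argument of (d) applied to $H_0$ shows $R_0=\pm I_4$. Next compare the border blocks $\mathbf{1}$, $-\mathbf{1}$ in rows $1$–$4$. These force the block permutation in $M$ to fix each block column, because the sign patterns of rows $2,3,4$ distinguish the four block columns. They also force each nonzero block of $M$ to be a pure permutation times $\pm1$, with the same sign as $R_0$. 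With $\sigma$ acting trivially on the first four rows and the border, I then apply part (b), in its row/column-swapped form, to conclude $\sigma=(\pm I,\pm I)$. That is trivial in the same sense as in (b).
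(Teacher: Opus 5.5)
Parts (b) and (d) are correct. For (d) you argue directly ($r_is_jh_{ij}=h_{ij}$ forces $r_is_j=1$ for all $i,j$), which is simpler than the paper's deduction from (b) and (c) and does not depend on the delicate step of (c). In (b), if ``$S$ fixes column $j_0$'' means $S_{j_0j_0}=1$, as in the paper, then $\epsilon=1$ and $\sigma$ is the identity, so there is no need to widen ``trivial''.

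The genuine gap is the last step of (a). Your border comparison correctly yields $R=S=\epsilon\,\mathrm{diag}(I_4,P_1,P_2,P_3,P_4)$, with each $P_i$ a $2k\times 2k$ permutation matrix. This matrix is not diagonal, so neither (b) nor its transposed version applies. Moreover, no argument can force the $P_i$ to be trivial: $\sigma_1=(F,F)$ and $\sigma_2=(L,L)$ of Proposition~\ref{automorphisms1} are nontrivial strong automorphisms that fix the first four rows and map each of $A,B,C,D$ to itself. The paper's proof breaks at the same point. It claims a nontrivial $R_2$ must alter $E_1$ or $E_2$, but permutations inside a $2k$-block leave both unchanged, because the corresponding columns of $E_1$ (and rows of $E_2$) coincide. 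So (a) is only correct if ``permutes the block matrices'' means the nonzero blocks of $R_2$ are $\pm I_{2k}$. Under that reading your border analysis already gives $M=\epsilon I_{8k}$, with no appeal to (b).

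In (c), your reduction to $-RH=HP$ and your identification of the crux are right. This is exactly the step the paper's proof only asserts (``it follows that every row of $H$ is multiplied by $-1$''), but you also leave it as a plan. It can be completed along your lines:
\begin{enumerate}
\item Put $d=-r$, so $\mathrm{diag}(d)H=HP$. For every $j$ the entrywise product $d\circ h_{\cdot j}$ is a column of $H$, the map sending $j$ to that column is a bijection, and $d$ itself is a column (take $j=1$).
\item On rows $1$--$4$, the first four columns have sign patterns with entry product $+1$, while all $8k$ block columns have patterns with product $-1$. If the pattern of $d$ had product $-1$, the $8k$ block columns would be sent injectively into the first four columns, which is absurd. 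So $d$ is one of columns $1$--$4$.
\item Suppose $d$ were column $2$, $3$ or $4$. Then a column passing through $A,-B,-C,D$ would be sent to one passing through $B,A,-D,-C$, through $C,D,A,B$, or through $D,-C,B,-A$, respectively. This forces a column of $A$, $A$ or $-A$ to equal a column of $B$, $C$ or $D$, contradicting the column sums: $-2$ for $A$ and $0$ for $B,C,D$, since each $\rho(g)$ is a permutation matrix.
\item Hence $d=\mathbf{1}$ and $R=-I$; then $HP=H$ forces $P=I$, i.e.\ $S=-I$.
\end{enumerate}
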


\begin{proof}
\begin{ithm}

\item Let $\sigma=(R,R)$ be a strong automorphism that permutes the blocks of $H$, and let $E_1$ and $E_2$ be the sub-matrices of $H$ obtained from its first 4 rows and columns, respectively. Since $\sigma$ preserves the blocks of $H$, the matrix $R$ has the form ${\rm diag}(R_1,R_2)$, where $R_1$ is a $4\times 4$ monomial matrix and $R_2$ is an $8k\times 8k$ block monomial  matrix. Observe that $\sigma$ can preserve the first four rows and columns of $H$ only if $R_1 = I_4$. If $R_2$ is non-trivial, then it permutes and/or multiplies by $-1$ some of the last $8k$ rows and the last $8k$ columns of $H$; this induces permutations and multiplication by $-1$ to some of the columns in the matrix $E_1$, and some of the rows in $E_2$.  However, this is not possible as $R=I_4$ cannot counteract the changes induced by $R_2$ in $E_1$ and $E_2$. This forces that $R_2$ is also trivial, which shows the statement. 


\item Assume that the column $j$ of $H=(h_{ij})$ is fixed by $Q^\intercal$. It follows from the equation $DHQ^\intercal = H$ that $(r_1 h_{1,j},\dots,r_n h_{n,j})^\intercal = (h_{1,j},\dots,h_{n,j})^\intercal$. This shows that $r_1=\dots=r_n=1$, and the result follows. 

\item Consider the equation $RH=HS$. The effect of multiplying $H$ by $R$ from the left shows that the first row of $H$ is multiplied by $-1$. Since $S$ permutes the columns of $H$ and/or multiply them by $-1$, it follows that $S=-P$ for a permutation matrix $P$. It follows that every row of $H$ is multiplied by $-1$; which shows that $r_i=-1$ for all $i$. This shows that $R=S=-I$. \qedhere

\item This follows from b) and c). 

\end{ithm}
\end{proof}

The lemma below provides information about automorphisms of KHMs that stabilise the first row and the first four rows of the matrix.

\begin{proposition}
Let $H$ be a KHM, and $(R,S)$ be an automorphism of $H$. 
\begin{ithm}
    \item If $R$ preserves the first row of $H$, then $S$ is a permutation matrix.
    \item If $R$ stabilises the first four rows of $H$, then $R$ and $S$ preserve the first four rows and block structure of $H$. In particular, each block of $H$ is fixed by $(R,S)$.
\end{ithm}
\end{proposition}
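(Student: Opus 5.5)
We work throughout with the equation $RH=HS$, which is equivalent to $RHS^\intercal=H$ because $S$ is orthogonal. For part (a), the key observation is that the first row of $H$ is the all-ones vector $\mathbf{j}$.

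For (a), suppose $R$ preserves the first row. We interpret this to mean that $R$ maps row $1$ to $\pm$ row $1$, and we may assume the sign is $+$ after replacing $(R,S)$ by $(-R,-S)$. Then the first row of $RHS^\intercal$ equals $\mathbf{j}S^\intercal$. Since $RHS^\intercal=H$, this gives $\mathbf{j}S^\intercal=\mathbf{j}$. Now $\mathbf{j}S^\intercal$ is the vector whose $i$th entry is the sum of the $i$th row of $S$. Each row of $S$ has a single nonzero entry $\pm1$, so every such entry must be $+1$. Hence $S$ is a permutation matrix.

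For (b), suppose $R$ stabilises rows $1,\dots,4$ setwise, up to sign. Then $R={\rm diag}(R_1,R_2)$ with $R_1\in\Mon_4(\{\pm1\})$ and $R_2\in\Mon_{8k}(\{\pm1\})$.

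1. \emph{Columns of the top strip.} Let $T$ denote the first four rows of $H$. The top-left $4\times 4$ block of $T$ is a Hadamard matrix of order $4$. The columns of $T$ fall into exactly $8$ distinct sign patterns, namely the columns of \eqref{TCCKimura}: the four "singleton" columns and, for each $j$, the column pattern repeated $m=2k$ times in the $j$th big block column.
2. \emph{$S$ respects the column classes.} From $RH=HS$ we get $R_1T=TS$. So $S$ permutes the columns of $T$ up to sign, and acts on them as $R_1$ does. The multiplicities of the eight patterns up to sign are $1,1,1,1$ (possibly merged) and $2k$. Checking \eqref{TCCKimura}, the column pattern of big block $j$ equals, up to sign, the first-four column $j$. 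So each pattern class up to sign has size $2k+1$: it consists of one column among $1,\dots,4$ together with one big block column. Thus $S$ permutes these four classes, but a priori it may mix the singleton column with block columns.
3. \emph{Separating singletons from block columns.} To rule out such mixing, compare the first four columns of $H$ with a block column, using the rows $5,\dots,8k+4$. Column $1$ restricted to these rows is constant on each block row, and the same holds for columns $2,3,4$. A block column, by contrast, restricted to the rows of the block strip $W$, has row sums given by NC1/NC2, which are $-2$ or $0$. Since $R_2$ is monomial, $S$ must map a column of the bottom part to a column of the bottom part up to sign. Counting entries of each sign in the bottom $8k$ entries distinguishes them: column $1$ has all entries $+1$, whereas a block column has entry sums $\le 2$ in absolute value, and $2<8k$. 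This shows that $S={\rm diag}(S_1,S_2)$ with $S_2$ a block monomial matrix.
4. \emph{$R_2$ is block monomial.} Applying the same argument with $H^\intercal$, which satisfies $R^\intercal H=HS^\intercal$ up to transposition, or applying it directly to rows, shows that $R_2$ is block monomial as well. The blocks of $H$ are then permuted compatibly. A literal reading of "each block is fixed" requires that the induced block permutation is trivial. We would deduce this from the fact that $R_1$ acts on the top strip in a way that determines the block permutation, combined with the assumption that the four rows are stabilised individually.

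The main obstacle is step 3. The exact combinatorial separation between the singleton columns and the block columns requires care with signs, and the $2k$-fold repetition is where the order $8k+4$ and $k\ge 3$ enter. If step 3 fails as stated, we would instead use the inner products of columns with the all-ones column $\mathbf{j}^\intercal$ of the bottom rows, again via NC1/NC2, to separate the two kinds of columns.
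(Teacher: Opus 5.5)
Part (a) is the paper's argument, but drop your reduction to sign $+$. Replacing $(R,S)$ by $(-R,-S)$ changes the conclusion. Indeed $(-I,-I)$ sends row $1$ to $-$row $1$, yet $-I$ is not a permutation matrix. The hypothesis must be read as $R$ fixing row $1$ with sign $+$, and then your identity $\mathbf{j}S^\intercal=\mathbf{j}$ finishes it.

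Part (b) has genuine problems. First, step 2 rests on a false observation. The big block columns of \eqref{TCCKimura} have top patterns $(1,1,1,-1)^\intercal,(1,1,-1,1)^\intercal,(1,-1,1,1)^\intercal,(1,-1,-1,-1)^\intercal$, each with an odd number of $-1$'s. Columns $1,\dots,4$ have an even number, and negation preserves this parity in length $4$. So the eight patterns are pairwise distinct even up to sign, with classes of sizes $1,1,1,1,2k,2k,2k,2k$, not $2k+1$. Since $R_1T=TS$ gives a column bijection carrying the class of $p$ onto the class of $R_1p$, class sizes are preserved and singletons can never mix with block columns. Step 3 is therefore unnecessary, and as written it would fail anyway. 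A signed monomial $R_2$ preserves neither the number of $+1$ entries nor the sum of a column; it can turn the all-ones lower part of column $1$ into any sign vector. The same objection applies to your fallback via inner products with the all-ones vector. With step 2 corrected, your transpose argument in step 4 does yield the true setwise statement: $R$ and $S$ are block diagonal with block monomial lower parts.

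Second, and more seriously, under your setwise-up-to-sign reading the final claim is false. The automorphisms $\sigma_3,\sigma_4$ of Proposition~\ref{automorphisms2} stabilise rows $1,\dots,4$ setwise but swap block rows and block columns, so no block is fixed. You notice the issue and appeal to the rows being ``stabilised individually'', but that is a different hypothesis and you never actually use it.

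The missing argument, which is the paper's, is short. If $R$ fixes rows $1,\dots,4$, then by (a) $S$ is a permutation matrix and $T=TS$. So each column is sent to a column with identical top four entries. As the eight patterns are distinct, $S={\rm diag}(I_4,S_{(1)},\dots,S_{(4)})$ with $2k\times 2k$ permutation blocks. Let $E$ be the first four columns of $H$; then $RE=E$. The all-ones first column of $E$ forces $R$ to be unsigned. The rows of $E$ are distinct: the four Hadamard rows on top, then $(1,1,1,-1),(1,1,-1,1),(1,-1,1,1),(1,-1,-1,-1)$ on the four block rows. Hence $R={\rm diag}(I_4,R_{(1)},\dots,R_{(4)})$. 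Therefore $R_{(i)}W=WS_{(j)}$ for the block $W$ in position $(i,j)$, which is exactly the statement that every block is fixed.
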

\begin{proof}
For \rm{a}), recall the form of a KHM from Table \ref{TCCKimura}.
If $R$ fixes the first row of $H$, which consists of only 1's, then $S$ cannot negate any columns, and hence is a permutation matrix. For \rm{b}), suppose that $R$ stabilises the first four rows of $H$. As $S$ is a permutation matrix, two columns $c_a,c_b$ in the same orbit under $S$ must have $h_{i,a} = h_{i,b}$ for each $i \in \{1,2,3,4\}$. It follows that $S$ fixes each of the first 4 columns and the block structure of $H$. Since $S$ fixes the first four columns, two rows $r_c$, $r_d$ in the same orbit under $R$ must have $h_{c,j} = h_{d,j}$ for each $j \in \{1,2,3,4\}$. Therefore, $R$ preserves the block structure of $H$. In particular, the blocks of $H$ are fixed under $(R,S)$.
\end{proof}


\section{Open questions}\label{Discussion}

All automorphisms \((R,S)\) of KHMs described in Section~\ref{AutsKHMs} have the block–diagonal form
\[
R = \mathrm{diag}(R_{1},R_{2}), \qquad 
S = \mathrm{diag}(S_{1},S_{2}),
\]
where \(R_{1},S_{1} \in \mathrm{Mon}_{4}(\{\pm 1\})\) and \(R_{2},S_{2}\) are $8k\times 8k$ block monomial matrices, with blocks of size $2k\times 2k$. In turn, each $2k\times 2k$--block is a block--monomial matrix, with sub-blocks of size $k\times k$, see for example Propositions \ref{automorphisms1}, \ref{automorphisms2} and \ref{holomorph}. The automorphism groups of the matrices constructed by Kimura and Niwasaki, and by Shinoda and Yamada (for \(p \leqslant 601\)), consist entirely of automorphisms of the same form with the same constraint on \(R_{1},S_{1}\) and \(R_{2},S_{2}\). 
In light of this consistent structure across all known constructions and examples, we conjecture the following.

\begin{conjecture}
Every automorphism $(R,S)$ of a KHM has the aforementioned block monomial form. 
\end{conjecture}

For $H$ a HM of order $4n$; Tonchev \cite[Theorem 2.1]{Tonchev} showed that if $p$ is a prime dividing $|\text{Aut}(H)|$ then one of the following occurs: $p$ divides $4n$ or $4n-1$ or $p\leqslant 2n-1$. If we assume that every automorphism of a KHM has the block monomial form described above, and $p$ divides  $|\text{Aut}(H)|$ then $p\leqslant k$. Moreover, if $p>3$, then an automorphism $(R,S)$ of order $p$ has to preserve the first four rows (and columns) of $H$. This shows that $R=\text{diag}(I_4,R')$ and $S=\text{diag}(I_4,S')$. Now $(R',S')$ can only act within the blocks $A,B,C,D$ of $H$, as otherwise it would change the signs of the first four rows (and columns) of $H$, which $I_4$ cannot counteract. The same argument shows that $(R',S')$ does not move the blocks $A,B,C,D$ of $H$ from their positions. Thus, we have the following.  

\begin{lemma}
Let $H$ be a KHM of order $8k+4$, let $p>3$ be a prime dividing $|\text{Aut}(H)|$ and let $(R,S)$ be a block monomial automorphism of order $p$. Then 
\[R={\rm diag}(I_4,R_{1},\dots,R_{8})\ \text{ and }\ S={\rm diag}(I_4,S_{1},\dots,S_{8})\] 
where $R_{i},S_{i}\in \text{Mon}_{k}(\pm)$ with $i\in \{1,\dots,8\}$.
\end{lemma}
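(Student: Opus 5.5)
We work with the block monomial form fixed in the open-questions discussion: $R=\mathrm{diag}(R_1,R_2)$ and $S=\mathrm{diag}(S_1,S_2)$, where $R_1,S_1\in\mathrm{Mon}_4(\{\pm1\})$ and $R_2,S_2$ are $8k\times 8k$ block monomial matrices. Each nonzero block of $R_2,S_2$ is a $2k\times 2k$ block monomial matrix with $k\times k$ sub-blocks. The proof has three steps, each using only that $\sigma=(R,S)$ has prime order $p>3$.

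\emph{Step 1: the $4\times4$ corner is trivial.} The map $\sigma\mapsto R_1$ is a homomorphism from $\langle\sigma\rangle$ to $\mathrm{Mon}_4(\{\pm1\})$, which has order $2^4\cdot 4!=2^7\cdot 3$. Since $p>3$, the image of $\sigma$ has order dividing $\gcd(p,2^7\cdot3)=1$, so $R_1=I_4$. The same argument gives $S_1=I_4$.

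\emph{Step 2: no mixing between blocks, and the $2k$-blocks stay in place.} Consider the $4\times 8k$ strip of $H$ consisting of rows $1$ to $4$ and columns $5$ to $8k+4$. The equation $RHS^\intercal=H$, restricted to this strip, reads $I_4\,E\,S_2^\intercal=E$. Here $E$ is the strip itself, which is constant on each of the eight column blocks of size $k$, with sign patterns $(\pm\bold 1)$ read off from \eqref{TCCKimura}.

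\begin{itemize}
\item[(i)] The $k$-column groups of $E$ are $\pm$ the four distinct columns $(1,1,1,-1)^\intercal$, $(1,1,-1,1)^\intercal$, $(1,-1,1,1)^\intercal$ and $(1,-1,-1,-1)^\intercal$. Each group is repeated for the two halves of each $2k$-block, and no two distinct groups are negatives of each other. So $S_2$ must map each column to one of identical sign pattern with sign $+1$. Hence $S_2$ can only permute, without sign, columns within the same $2k$-block.
\item[(ii)] Using the strip of columns $1$ to $4$ and rows $5$ to $8k+4$, the same argument shows that $R_2$ acts within each $2k$ row-block with no signs.
\end{itemize}

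Consequently $R_2$ and $S_2$ are block diagonal with $2k\times 2k$ permutation blocks.

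\emph{Step 3: the $k\times k$ halves are not swapped.} Each $2k$-block of $R_2$ is a block monomial matrix with $k\times k$ sub-blocks, so it is either block diagonal or block anti-diagonal. A block anti-diagonal piece $T$ swaps the two halves, so $T^2$ is block diagonal. Write $\epsilon:\langle\sigma\rangle\to C_2^{8}$ for the map recording, for each of the four blocks of $R_2$ and $S_2$, whether that block is anti-diagonal. This is a homomorphism: multiplying block (anti-)diagonal matrices adds the swap indicators mod $2$. Since $p$ is odd, $\epsilon(\sigma)$ is trivial. Therefore every $2k$-block is $\mathrm{diag}(R_{2i-1},R_{2i})$ with $R_j\in\mathrm{Mon}_k(\pm)$, and similarly for $S$. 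This is the claimed form.

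\emph{Main obstacle.} The most delicate point is Step 2. One must check carefully that the eight $k$-column groups of the first four rows realise only four distinct patterns, and that these four patterns are pairwise not negatives of each other. This can be read off directly from \eqref{TCCKimura}. Even without Step 2's combinatorics, the fallback is the homomorphism argument: $\sigma$ induces a signed permutation of the four $2k$-blocks, which lies in a group of order $2^4\cdot 4!$, so the induced signed permutation is trivial because $p>3$.
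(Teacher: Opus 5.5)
Your proof is correct and follows the paper's route: the order $2^7\cdot 3$ of $\mathrm{Mon}_4(\{\pm1\})$ forces the $4\times 4$ corners to be $I_4$, and then the $\pm\mathbf{1}$ border rows and columns of $H$, which $I_4$ cannot compensate, force $R_2,S_2$ to act within the four $2k$-blocks without moving them. Your Step 3 parity argument, which rules out swapping the two $k\times k$ halves inside each $2k$-block because $p$ is odd, makes explicit a point the paper passes over when it goes from ``the blocks $A,B,C,D$ are not moved'' directly to the eight-block diagonal form.
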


If the automorphism $(R,S)$ has order $p>3$, then it follows from the equation $RH=HS$ that 
\[
R_{1} [\begin{array}{cccc}\bold{1}^\intercal & \bold{1}^\intercal & \bold{1}^\intercal & -\bold{1}^\intercal \end{array}]= [\begin{array}{cccc}\bold{1}^\intercal & \bold{1}^\intercal & \bold{1}^\intercal & -\bold{1}^\intercal \end{array}].
\]
This shows that $R_{1}$ is a permutation matrix. A similar argument shows that $R_{i}$ and $S_{i}$ are permutation matrices for $i=1,\dots,8$. 

It follows that the action of an automorphism $(R,S)$ of order $p$, where $3<p\leqslant k$ is prime, on a KHM $H$ of order $8k+4$ is completely determined by how the components $R_{i}$ and $S_{i}$ act on the blocks $W_1,W_2,W_1^\intercal,W_2^\intercal$ of each $W\in \{A,B,C,D\}$. All the examples presented in this work suggest the following.

\begin{conjecture}
Let $H$ be a KHM of order $4(2k)+4$. If $(R,S) \in \mathrm{Aut}(H)$ has prime order $p$, then $p=2$ or $p$ divides $k$.
\end{conjecture}

Finally, we turn out attention back to Section \ref{AutOfKnownEx}; in view of the data compiled in Tables \ref{tab1} and \ref{tab2}, and the discussion throughout such section,
we conjecture the following. 

\begin{conjecture}
The order of the automorphism group of a KHM $H$ satisfies $|\text{Aut}(H)|=2^r\cdot 3^s \cdot k$, where $r\geqslant 4$ and $s\in \{0,1\}$.    
\end{conjecture}



\end{document}